\def\eps{\varepsilon}
\def\R{\mathbb{R}}
\newtheorem{theorem}{Theorem}[section]
\newtheorem{corollary}{Corollary}[theorem]
\newtheorem{lemma}[theorem]{Lemma}
\newtheorem{prop}[theorem]{Proposition}
\newtheorem{remark}[theorem]{Remark}
\title{Chemotactic Reaction Enhancement in One Dimension}
\author{Yishu Gong}
\thanks{Department of
Mathematics, Duke University, Durham NC 27708, USA;
email: yishu.gong@math.duke.edu}
\author{Alexander Kiselev}
\thanks{Department of
Mathematics, Duke University, Durham NC 27708, USA;
email: kiselev@math.duke.edu}
\begin{document}

\begin{abstract}
Chemotaxis, the directional locomotion of cells towards a source of a chemical gradient, is an integral part of many biological processes - for example, bacteria motion, single-cell or multicellular organisms development, immune response, etc. Chemotaxis directs bacteria's movement to find food (e.g., glucose) by swimming toward the highest concentration of food molecules. In multicellular organisms, chemotaxis is critical to early development (e.g., movement of sperm towards the egg during fertilization). Chemotaxis also helps mobilize phagocytic and immune cells at sites of infection, tissue injury, and thus facilitates immune reactions. In this paper, we study a PDE system that describes such biological processes in one dimension, which may correspond to a thin channel, the setting relevant in many applications: for example,
spermatozoa progression to the ovum inside a Fallopian tube or immune response in a blood vessel.
Our objective is to obtain qualitatively precise estimates on how chemotaxis improves reaction efficiency,
when compared to purely diffusive situation. The techniques we use to achieve this goal include a variety of comparison principles and analysis of mass transport for a class of Fokker-Planck operators.
\end{abstract}
\maketitle

\section{Introduction}\label{introduction}
\hspace{\parindent}Chemotaxis is the movement of an organism in response to a chemical stimulus. Patlak \cite{Patlak} and Keller-Segel \cite{KS1} postulated equations to describe the density evolution of bacteria or mold exhibiting chemotaxis without metabolism in a concentration gradient of a given organic attractant. We state it in the simplified parabolic-elliptic form \cite{Pert}:
\begin{equation}\label{eq:keller}
    \partial_t \rho -\Delta \rho -\chi \nabla (\rho \nabla (-\Delta)^{-1} \rho)=0, \quad \rho(x,0)=\rho_0(x).
\end{equation}
Here $\rho(x, t)$ is the bacterial density as a function of $x$ and $t$, and $c(x,t)=(-\Delta)^{-1}\rho(x,t)$ is the attractant concentration. This relationship relies on the fact that the chemical is produced and diffuses at faster time scales than other dynamics in \eqref{eq:keller}. Keller-Segel equation has been extensively analyzed as a model for chemotaxis due to its ability to capture key phenomena and its intuitive nature; and therefore, many variants of the Keller-Segel model have been suggested to reflect more nuanced phenomena (see e.g.  \cite{DS},  \cite{HP}, \cite{JV1}, \cite{OH1}, \cite{PVW}). In particular, global regularity for the solutions to \eqref{eq:keller} in dimension one is well-known \cite{Pert}.

The relevance of chemotaxis is ubiquitous in biology. In many cases, chemotaxis serves to enable and enhance a variety of processes that can be thought of as reactions. For instance, bacteria use chemotaxis to move towards their food by swimming, sensing whether it has moved towards or away from the food, and finally reaching  and consuming the nutrient \cite{Sour}. During fertilization process, spermatozoa follow a concentration gradient of a chemoattractant secreted from the oocyte increasing the chances of fertilization \cite{Suarez}. In immune system, cytokines and chemokines   attract macrophages and neutrophils to the site of infection, ensuring that pathogens in the area are destroyed \cite{Turner}. Chemotaxis can also produce undesirable results:
in the case of cancer metastasis, organ-specific stromal cells release signaling proteins that induce chemotaxis and attract cancer cells \cite{Muller}.

In mathematical literature, existence, regularity, finite time blow up and long-time behavior of solutions to reaction diffusion equations with chemotaxis received much attention (see e.g. \cite{cpz,ESV,EW,MT,OTYM,TW,W1,W2,Winkler3,WJDE}).
On the other hand, as far as we know, relatively little rigorous work has been done to explain quantitatively and rigorously how chemotaxis influences reaction rates.
Motivated by the particular setting of fertilization process for corals, the authors of \cite{kr} and \cite{kr2} took the first steps toward quantitative understandings of reaction enhancement by chemotaxis by adding absorbing reaction and fluid flow to \eqref{eq:keller},
resulting in the single equation model
\begin{equation}\label{eq:coral}
    \partial_t \rho + (u\cdot \nabla)\rho -\Delta \rho +\chi \nabla (\rho \nabla (-\Delta)^{-1}\rho)=-\epsilon \rho^q, \quad \Delta \cdot u=0, \quad \rho(x,0)= \rho_0(x)\geq 0.
\end{equation}
Once a year, entire colonies of coral reefs simultaneously release their eggs and sperm, called gametes, into the ocean. The gametes, full of fatty substances called lipids, rise slowly to the ocean surface, where the process of fertilization takes place. Field measurements shows that corals have a fertilization success rate around 50\%, and often as high as 90\% (see e.g. \cite{lasker}, \cite{pennington}), while numeric simulations based on purely diffusive models \cite{ds} only predicted successful fertilization rate of less than 1\% as the gametes are strongly diluted. When fluid flow is added to the model as in \cite{chw} and \cite{ccw}, the gap between simulations and field measurements can be reduced, but does not appear to vanish completely. This may be due to the fact that eggs release a chemical that attracts the sperm (see e.g. \cite{coll1}, \cite{coll2}, \cite{miller}, \cite{miller2}
\cite{RZ}, \cite{ZR}). These actual and numerical experiments suggest that chemotaxis plays a role in coral and other marine animals fertilization.
In the framework of \eqref{eq:coral}, the papers \cite{kr} and \cite{kr2} showed that chemotaxis enhances reaction significantly - especially when reaction is weak (which is common in many biological processes \cite{VCCW}). The reaction rate can be measured through the decay of the total mass of the remaining density, $m(t)=\int \rho(x,t)dx$. On one hand, if chemotaxis is not present ($\chi=0$), the decay of $m(t)$ is very slow.
On the other hand, when chemotaxis is present ($\chi\neq0$), the decay of $m(t)$ and its time scale are independent of the reaction strength $\varepsilon$, and can be very fast and significant if the chemotactic coupling is strong.

The shortcoming of the model \eqref{eq:coral} is that it deals with a single equation, essentially assuming that the egg and sperm densities are identical and chemotactic on each other.
However, in most settings there are two distinct species only one of which is chemotactic on a chemical secreted by the other.
In the recent work \cite{KNRY}, the authors have considered a more realistic model given by
\begin{equation} \label{eq:full_system}
\begin{cases}
\partial_t\rho _{1}-\kappa\Delta \rho _{1}+\chi \nabla \cdot \left(\rho _{1}\nabla \left( -\Delta \right) ^{-1}\rho _{2}\right)=-\varepsilon \rho _{1}\rho _{2},\\
\partial_t\rho _{2}=-\varepsilon \rho _{1}\rho _{2}. \\
\rho_1(x,0)=\rho_{1,0}(x), \, \rho_2(x,0)=\rho_{2,0}(x), \, x\in \mathbb{R}.
\end{cases}
\end{equation}
Here the ambient flow is set to zero, and any essential effect from it is assumed to be encoded by effective diffusion.
Notice that we denote the strength of chemotactic coupling by $\chi,$ the same notation that is commonly used to denote the characteristic function of a set.
This is the often used notation in the field, and though we will overload $\chi$ by using it in these two different ways, it should be always clear from the context what we mean.
The density $\rho_2$ describes immobile target, while the density $\rho_1$ reacts with it and is guided by chemotactic attraction of elliptic Keller-Segel type.
Such model setting is reasonable in many instances - in case of the immune system signalling, some reproduction processes or even plant-pollinator interaction. The system \eqref{eq:full_system} models a fast diffusing chemical moving towards a fixed target under chemical
attraction created by the target, then reacts with the target when in contact. In \cite{KNRY}, the system \eqref{eq:full_system} is set in $\R^2.$ The main result is the comparison of the reaction half-time
with and without chemotaxis, for the case of the radial initial data (both for $\rho_1$ and $\rho_2$). The paper \cite{KNRY} also contains a detailed analysis of the linear problem: a Fokker-Planck operator with a logarithmic potential.

Our goal in this paper is to present the analysis of the one-dimensional analog. As we will see, at least in some configurations the chemotaxis has a significantly stronger effect in one dimension than in two. This may explain why in many instances
the biological systems feature quasi-one-dimensional structures where chemotaxis is involved. One example is spermatozoa progression to the ovum inside a Fallopian tube \cite{teves}, another is immune response in blood vessels \cite{manes}.
In this work, we also eliminate the radial assumption of \cite{KNRY}: our solutions do not have to be even.


Before we state our main result, we make further assumptions on the initial data $\rho_{2,0}$ and $\rho_{1,0}.$
For simplicity of statements, it is convenient for us to assume that $\rho_{2,0}$ is a characteristic function of the interval $[-\frac{\ell}{2}, \frac{\ell}{2}],$
but in this case lack of regularity leads to unnecessary technical issues. So we will assume that the initial data for $\rho_2$ is smooth, compactly supported, and very close
to the characteristic function $\chi_{[-\ell/2,\ell/2]}(x).$
Namely, we set
\begin{equation}\label{rho2init}
\rho_{2,0} = \sigma \eta(x), \,\,\,\chi_{[- \left(\frac12-\frac{1}{1000} \right) \ell, \left(\frac12-\frac{1}{1000} \right) \ell]}(x) \leq  \eta(x) \leq \chi_{[-\frac12 \ell, \frac12 \ell]}(x), \,\,\,\eta \in C_0^\infty.
\end{equation}
Here $\sigma$ is the parameter modulating the total mass of the $\rho_2$ species. It is not hard to adapt the argument for more general initial data $\rho_{2,0}$ by adjusting the constants in the arguments and results below.  
Regarding $\rho_{1,0},$ we would like to model the situation where the mass $M_0 = \int \rho_{1,0}(x,t)\,dx$ is very large and is situated at some distance $\sim L \geq \ell$ away from
the location of $\rho_2.$ Specifically, we will assume that
\begin{equation}\label{rho1init}
\int_{|x| \leq L} \rho_{1,0}(x)\,dx = M_0, \,\,\,\int_{|x| \leq L/2} \rho_{1,0}(x)\,dx \leq 1 \ll M_0.
\end{equation}

By scaling time and space, we can normalize two parameters of the system, and we choose to set $\kappa=\ell=1.$
This corresponds to the change of coordinates $x'=x/l,$ $t'=\kappa t/l^2.$ 
In the new coordinates, the system we will consider takes form
\begin{equation} \label{eq:sim_system}
\begin{cases}
\partial_t\rho _{1}-\Delta \rho _{1}+ \chi' \nabla \cdot \left(\rho _{1}\nabla \left( -\Delta \right) ^{-1}\rho _{2}\right)=-\varepsilon' \rho _{1}\rho _{2},\\
\partial_t\rho _{1}=-\varepsilon' \rho _{1}\rho _{2},\\
\rho_1(x,0)=\rho_{1,0}(x), \, \rho_2(x,0)=\rho_{2,0}(x), \, x\in \mathbb{R}.
\end{cases}
\end{equation}
where $\rho_{2,0} (x):=\sigma \eta(x)  \sim \sigma \chi_{[-\frac{1}{2},\frac{1}{2}]}.$ Here $\chi' = \ell^2 \chi/ \kappa,$ $\varepsilon' = \ell^2 \varepsilon / \kappa,$ and the parameters $M_0$ and $L$ from \eqref{rho1init}
are replaced by $M_0'=M_0/\ell^2$ and $L'=L/\ell$ respectively. In the rest of the paper, we will abuse notation and omit the primes in notation of the renormalized parameters. It will also be convenient for us to introduce
the parameter $\gamma = \sigma \chi' = \sigma \ell^2 \chi /\kappa.$
Our goal is to compare the two reaction scenarios, with chemotaxis and without chemotaxis. We can characterize the reaction rate through the decay rate of the $L^1$ norm of $\rho_2$ as density is non-negative.
Then we can measure the reaction rate through the time scale $T$ during which a fixed portion of the initial mass $\sim \sigma$ of $\rho_2$ will react. It will be convenient for us to take this portion to be $\frac{1}{4}$, and it is natural to call the corresponding time scale
the ``typical time scale" of the reaction.
The mass $\int \rho_1(\cdot, t) dx$ will not decay significantly over time $T$ as we will assume that there is more $\rho_1$ than $\rho_2$ initially ($M\gg\sigma$); this is a natural assumption for the processes we have in mind. Our first result is the following theorem.
\begin{theorem}\label{thm:main}
Assume that $\rho_1$ and $\rho_2$ solve \eqref{eq:sim_system} with the initial data as described in \eqref{rho1init} and \eqref{rho2init} (the latter with $\ell=1$).
Let \begin{equation}\label{assump319}
\chi^2 \sigma /\varepsilon \geq a >0.
\end{equation}
Then there exists $B>0,$ depending only on $a,$ such that if
\begin{equation}\label{assum1222}
\frac{M_0 \epsilon}{\gamma}, \gamma, \frac{M_0}{\sigma} \geq B,
\end{equation}
then
the typical reaction time $T_C$ during which at least a quarter of the initial mass of $\rho_2$ has been reacted out satisfies $T_C\lesssim\frac{L}{\gamma}$. On the other hand, in the purely diffusive case where $\chi=0,$
it takes a time $T_D\gtrsim \frac{L^2}{\log(M_0\varepsilon L)}$ to react out a quarter of the initial mass of $\rho_2$.
\end{theorem}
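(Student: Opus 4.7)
In one dimension $(-\Delta)^{-1}\rho_2(x) = -\tfrac12\int|x-y|\rho_2(y)\,dy$, so the drift entering the $\rho_1$ equation,
\[
u(x,t):=\chi\,\partial_x(-\Delta)^{-1}\rho_2(x,t) = -\tfrac{\chi}{2}\int\mathrm{sgn}(x-y)\,\rho_2(y,t)\,dy,
\]
is spatially constant on each of the half-lines $\{x>1/2\},\{x<-1/2\}$ (equal to $\mp\chi M_2(t)/2$ with $M_2(t):=\int\rho_2$) and approximately the linear confining drift $-\gamma x\,M_2(t)/\sigma$ inside $[-1/2,1/2]$; it always points toward the origin when $M_2>0$. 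The basic idea is that so long as $M_2$ is comparable to $\sigma$, this drift transports $\rho_1$-mass at speed $\sim\gamma$ and traps it near the support of $\rho_2$, yielding a transport time of order $L/\gamma$ from distance $L$. Let $\tau:=\inf\{t:M_2(t)\leq 3\sigma/4\}$, which is exactly $T_C$, and set $T_*:=C_1 L/\gamma$ for an absolute constant $C_1$ to be chosen large. It suffices to show $\tau\leq T_*$. Arguing by contradiction, I assume $\tau>T_*$, so that throughout $[0,T_*]$ one has $|u(x,t)|\geq 3\gamma/8$ on $\{|x|>1/2\}$.

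\textbf{Transport and reaction.} By \eqref{rho1init} at least $M_0/4$ of the initial $\rho_1$-mass lies on $[-L,-L/2]\cup[L/2,L]$; without loss of generality a mass $\gtrsim M_0$ sits on $[L/2,L]$ (the symmetric case is identical). On $\{x>1/2\}$, where $\rho_2\equiv 0$, $\rho_1$ satisfies the linear drift-diffusion equation $\partial_t\rho_1=\partial_x^2\rho_1-u(t)\,\partial_x\rho_1$ with spatially constant (time-dependent) drift. The plan is to construct an explicit Gaussian supersolution on $(1/2,\infty)$ with Dirichlet condition at $x=1/2$ and deduce that the $\rho_1$-mass remaining in $\{x>1/2\}$ at $t=T_*$ is at most $M_0 e^{-c\gamma L}$, so a $\rho_1$-mass $\gtrsim M_0$ has crossed into $[-1/2,1/2]$. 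Inside $[-1/2,1/2]$, the confining drift has equilibration timescale $1/\gamma\ll L/\gamma$, and a short further analysis will show that the freshly arrived mass relaxes quickly onto a profile concentrated in an $O(1/\sqrt\gamma)$-window around $0$ on which $\rho_2\gtrsim\sigma$, giving $\int\rho_1\rho_2\,dx\gtrsim M_0\sigma$. The reaction then consumes $\sigma/4$ of $\rho_2$ in an additional time $\lesssim 1/(\varepsilon M_0)\leq L/(B\gamma)\ll T_*$ for $B$ large; this contradicts $\tau>T_*$, so in fact $\tau\leq T_*$ and $T_C\lesssim L/\gamma$.

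\textbf{Diffusive lower bound and main obstacle.} For $\chi=0$, $\partial_t\rho_1\leq\Delta\rho_1$, so by the maximum principle $\rho_1\leq e^{t\Delta}\rho_{1,0}$. I will split $\rho_{1,0}$ into the part supported in $\{|y|\leq L/2\}$ (mass $\leq 1$) and that in $\{L/2\leq|y|\leq L\}$ (mass $\leq M_0$) and use the heat kernel to obtain
\[
R(t):=\int_{-1/2}^{1/2}\rho_1(x,t)\,dx\leq 1+\frac{CM_0}{\sqrt{t}}\exp\!\left(-\frac{(L-1)^2}{16t}\right).
\]
Since $-\tfrac{d}{dt}M_2=\varepsilon\int\rho_1\rho_2\,dx\leq\varepsilon\sigma R(t)$, the $\rho_2$-mass consumed up to time $T$ is at most $\varepsilon\sigma\int_0^T R(s)\,ds$. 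For $T_D=cL^2/\log(M_0\varepsilon L)$ with $c$ small, the exponential factor above is bounded by $(M_0\varepsilon L)^{-k}$ for $k$ as large as desired, and a direct integration gives $\int_0^{T_D}R(s)\,ds\leq 1/(4\varepsilon)$; hence at most $\sigma/4$ of $\rho_2$ has reacted by $T_D$, proving $T_D\gtrsim L^2/\log(M_0\varepsilon L)$. I expect the main technical difficulty to lie in the reaction step in the chemotactic case: the confining drift inside $[-1/2,1/2]$ is only approximately $-\gamma x$ and evolves through the slowly decreasing $M_2(t)$, and one must also rule out non-uniform depletion of $\rho_2$ that could weaken confinement or reaction. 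The hypotheses \eqref{assump319}--\eqref{assum1222} are precisely what separate the three relevant timescales (transport $L/\gamma$, confinement $1/\gamma$, reaction $1/(\varepsilon M_0)$) and keep the per-transit absorption probability bounded below, making all of these bounds quantitative.
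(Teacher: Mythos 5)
Your diffusive lower bound is essentially the paper's argument (compare with the free heat flow, bound the heat kernel on the support of $\rho_2$, integrate the ODE for $\rho_2$), and your transport step, though technically different from the paper's (the paper freezes the time-dependent drift by comparing with a Fokker--Planck equation with the fixed ``weakest'' even potential \eqref{eq:least_concentrated_H} via the mass-concentration comparison of Proposition~\ref{prop:mass_comp}, then uses duality and a barrier for the adjoint equation), is at least of a plausibly fixable nature --- though note that a supersolution on $\{x>1/2\}$ with a Dirichlet condition at $x=1/2$ cannot dominate $\rho_1$ there, since $\rho_1(1/2,t)>0$, so the boundary has to be handled differently, and the ``$+1$'' of mass allowed in $\{|x|\le L/2\}$ by \eqref{rho1init} is only harmless for the $T_D$ bound if that mass sits at distance $\sim L$ from the origin (the paper flags exactly this in Remark~1.2, item~4; your bound $\varepsilon\int_0^{T_D}R\,dt\le 1/4$ is not justified for that piece when $\varepsilon T_D$ is large).

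The genuine gap is the reaction step in the chemotactic case. You claim that after arrival the $\rho_1$-mass relaxes onto an $O(1/\sqrt\gamma)$-window around $0$ ``on which $\rho_2\gtrsim\sigma$,'' giving $\int\rho_1\rho_2\,dx\gtrsim M_0\sigma$ for a time $\sim 1/(\varepsilon M_0)$, long enough to consume $\sigma/4$ of $\rho_2$. This fails: the depletion of $\rho_2$ is pointwise, $\rho_2(x,t)=\rho_{2,0}(x)\exp\bigl(-\varepsilon\int_0^t\rho_1(x,s)\,ds\bigr)$, so the concentrated profile burns out $\rho_2$ only inside the narrow window, which carries merely an $O(\gamma^{-1/2})$ fraction of the $\rho_2$-mass; outside that window the equilibrated $\rho_1$ is smaller by a Gibbs factor $e^{-c\gamma}$ (the stationary profile is $\propto e^{H}$ with $H$ varying by $\sim\gamma$ across $[-1/2,1/2]$), and once the window is exhausted the product $\int\rho_1\rho_2\,dx$ collapses, so no consumption of a fixed fraction of $\sigma$ follows on the timescale $1/(\varepsilon M_0)$. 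This is precisely the obstruction the paper points out (the possible $e^{\gamma}$ non-uniformity of $\rho_1$ on the support of $\rho_2$) and the reason it does \emph{not} argue via $\int\rho_1\rho_2\,dx\gtrsim M_0\sigma$ after settling; your parenthetical remark that one ``must also rule out non-uniform depletion of $\rho_2$'' is the crux of the problem, not a side condition. The paper's resolution is the ``pass-through'' argument of Section~\ref{passage}: since a mass $\gtrsim M_0$ must cross the annulus $\bigl(\tfrac{6}{25},\tfrac12\bigr)$ (on at least one side), one shows the space-averaged bound $\frac{1}{|I|}\int_I\int_0^{T}\partial_r M(r,t)\,dt\,dr\gtrsim M_0/\gamma$ on intervals $I$ of length $\gamma^{-1}$ directly from the equation for $M(r,t)=\int_{-r}^r\rho_1$, and then upgrades it to the pointwise bound $\int_0^T\bigl(\rho_1(x,t)+\rho_1(-x,t)\bigr)dt\gtrsim M_0/\gamma$ for \emph{every} $x$ in the annulus via the parabolic Harnack inequality applied to the rescaled equation for $\rho_1$ --- this is where hypothesis \eqref{assump319} ($\chi^2\sigma/\varepsilon\ge a$) is actually used, to control the zeroth-order coefficient $\varepsilon\rho_2$ after rescaling. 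Then $\varepsilon M_0/\gamma\ge B$ kills $\rho_2$ pointwise on a set of measure $\tfrac{13}{50}>\tfrac14$ by time $T\sim L/\gamma$, independently of how non-uniform $\rho_1$ becomes after it lands on the support of $\rho_2$. Without some substitute for this time-integrated, pointwise-in-$x$ lower bound on $\rho_1$ during the crossing, your argument does not close.
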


\begin{remark} Here we make the following observations and clarifications. \newline
1. The notation $\lesssim$, $\gtrsim$, and $\sim$ means, as usual, bounds with universal constants independent of the key parameters of the problem. \\
2. The assumption \eqref{assum1222} is natural in many settings. For instance, for many marine animals the spawning process involves a
typical number of sperm of the order $M_0 \sim 10^{10},$ number of eggs $\sim 10^6,$ and $\varepsilon \sim 10^{-2}.$ It is difficult to find data on measurements of chemotactic constant in the biological literature. \\
3. The condition involving $\chi^2 \sigma /\varepsilon \geq a >0$ appears from the application of Harnack inequality, which requires a bound on this constant from below. The constant $B$ only deteriorates if $a$ becomes small. \\
4. In the second condition in \eqref{rho1init} we could replace the region of integration by $|x| \leq 1$ as far as the upper bound on $T_C$ is concerned; but we need the support of $\rho_1$ to be initially at a distance $\sim L$
from the origin for the lower bound on $T_D.$
\end{remark}
Theorem \ref{thm:main} suggests in what kinds of situations the presence of chemotaxis can significantly improve reaction rates.
An important advantage that chemotaxis confers in dimension one compared to the result of \cite{KNRY} in dimension two is the reduction in the exponent of $L.$
This comes from the fact that in dimension two, the drift produced by chemotaxis has form $\sim -\gamma x/|x|^2$ at large distances from the origin, while in dimension one it is $\sim -\gamma x/|x|.$
We discuss it in more detail in Section \ref{Heuristics} below.



Naturally, the proof of Theorem \ref{thm:main} relies on the positive effect of chemotaxis speeding up the transportation of $\rho_1$ to the region where $\rho_2$ lies. In order to quantify this effect, we would like to compare the solution $\rho_1(x,t)$ of the system \eqref{eq:sim_system} to the solution of the Fokker-Planck equation
\begin{equation}\label{eq:fokker1}
    \partial_t \rho-\Delta \rho +\nabla\cdot(\rho\nabla H_0)=0
\end{equation}
where $\rho(x,0)=\rho_1(x,0)$ and $H_0(x,t) =\chi (-\Delta )^{-1}\rho_2(x,t)$. The time dependence of $H_0$ causes difficulty in analyzing \eqref{eq:fokker1}.
Our argument will run in two stages, and at the first stage we will consider the time interval on which at most a quarter of the total mass of $\rho_2$ is consumed.
On this time interval, we will replace $H_0$ by the stationary potential $H$ that creates the ``weakest" drift potential. 
It turns out that such $H$ will be given by
\begin{equation}\label{eq:def_H}
     H(x)=\begin{cases}
               \gamma (-\Delta)^{-1}\left( \chi _{\left[ -1/6, {1}/{2}\right] }(x)\right),\quad x\geq 0, \\
               \gamma (-\Delta)^{-1}\left( \chi _{\left[ -{1}/{2},1/6\right] }(x)\right),\quad x\leq 0,
            \end{cases}
\end{equation}
where, as before, $\chi_{\mathcal{S}}(x)$ denotes the characteristic function of a set $\mathcal{S}$. In one dimension, we have $H(x)=-\frac{1}{3}\gamma |x|$ for large $x$, so we are dealing with a Fokker-Planck equation with a linear potential.
We develop appropriate comparison tools to control the solution of \eqref{eq:sim_system} using \eqref{eq:fokker1} with potential \eqref{eq:def_H}, and derive transport estimates for \eqref{eq:fokker1} using a barrier function.
These transport estimates will tell us when a meaningful portion of the density $\rho_1$ has arrived into support of $\rho_2.$ We then show that by this time, at least a quarter of the mass of $\rho_2$ had to be consumed.  

The paper is organized as follows. In Section 2 we provide the heuristic motivation of the main result, in Section 3 recall a statement of the theorem that gives global well-posedness for the system \eqref{eq:sim_system}.
In Section 4, we compare our solutions to \eqref{eq:sim_system} with solutions to the Fokker-Planck equations with time-independent linear potential. In Section 5, we give a transport estimates based on comparison principles.
In Section 6, we use the results in previous sections to prove Theorem \ref{thm:main} by estimating reaction that happens on ``pass through".

\section{The Diffusive Estimates and Heuristics}\label{Heuristics}
\subsection{Estimates in the Purely Diffusive Case}
\label{Heuristics_diffusive}
In this subsection we consider the following system without chemotaxis term:
\begin{equation} \label{eq:diffusive_system}
\begin{cases}
\partial_t\rho _{1}-\Delta \rho _{1}=-\varepsilon \rho _{1}\rho _{2},\\
\partial_t\rho _{2}=-\varepsilon \rho _{1}\rho _{2},\\
\rho_1(x,0)=\rho_{1,0}(x), \, \rho_2(x,0)=\rho_{2,0}(x), \, x\in \mathbb{R}.
\end{cases}
\end{equation}
where the initial data are the same as in \eqref{rho1init}, \eqref{rho2init}. 
Let $T_D$ denotes the time by which a quarter portion of the initial mass of $\rho_2$ is reacted out.
Our goal in this subsection is to find a rigorous lower bound for $T_D$.
This bound can be derived by comparing \eqref{eq:diffusive_system} with the following system:
\begin{equation} \label{eq:diffusive_system_comp}
\begin{cases}
\partial_t g _{1}=\Delta g _{1},\\
\partial_t g _{2}=-\varepsilon g _{1}g _{2},\\
g_1(x,0)=\rho_{1,0}(x), \, g_2(x,0)=\rho_{2,0}(x), \, x\in \mathbb{R}.
\end{cases}
\end{equation}
Comparison principle directly yields that $\rho_1(x,t)\leq g_1(x,t)$ for all $t\geq0$ and $x\in \mathbb{R}$, which implies that $\rho_2(x,t)\geq g_2(x,t)$.
Hence, if we denote $\tau$ the time it takes for $||g_2(\cdot, t)||_{L^1}$ to drop by a quarter, one would have $\tau\leq T_D.$

Recall that $g_1(\cdot,0)=\rho_1(\cdot,0)$ has mass $M_0$ and is concentrated distance $\sim L$ away from the origin. 
Observe that
\[ g_{1}\left( x,t\right) =\dfrac {1}{\sqrt {4\pi t}}\int _{\mathbb{R} }e^{-\frac {\left| x-y\right| ^{2}}{4t}}\rho _{1}\left( y,0\right) dy, \]
and therefore
\begin{equation*} \label{eq:g1}
\frac {1}{\sqrt {4\pi t}}e^{-\frac {C L^2}{t}}M_0 \leq  g_1(x,t) \leq \frac {1}{\sqrt {4\pi t}}e^{-\frac {C_2 L^2}{t}}M_0  \text{ for all $x\in [-\frac{1}{2},\frac{1}{2}]$}
\end{equation*}
for some $0<C_2<C,$ for all times.
One can substitute this in the equation for $g_2$ \eqref{eq:diffusive_system_comp} and get that
\begin{equation*} \label{eq:g2}
\partial_t\ln g_{2} \gtrsim -\dfrac{\varepsilon }{\sqrt {t}}e^{-\frac {C_2 L^{2}}{t}}M_0.
\end{equation*}
Hence, $\tau$ is determined by
\begin{equation*} \label{eq:half_time}
\int^{\tau }_{0}\frac {\varepsilon M_0}{\sqrt {t}}e^{-\frac {C_2 L^{2}}{t}}dt\gtrsim 1,
\end{equation*}
which, after the change of variable $y = C_2 L^2/t,$ is equivalent to
\begin{equation}\label{tvarchange}
L \int_{C_2L^2/\tau}^\infty \frac{e^{-y}}{y^{3/2}}\,dy \gtrsim \frac{1}{\eps M_0}.
\end{equation}
It is convenient to distinguish two asymptotic cases. \\
{\bf Case 1:} $\eps M_0 \gg 1.$ This is the case we are focusing on in this paper, as implied by \eqref{assum1222}. In this case we must have $C_2L^2/\tau \gg 1,$ and thus
\[ \int_{C_2L^2/\tau}^\infty \frac{e^{-y}}{y^{3/2}}\,dy \leq \int_{C_2L^2/\tau}^\infty e^{-y}\,dy \leq e^{-C_2L^2/\tau}. \]
Hence
\[ L e^{-C_2L^2/\tau} \gtrsim \frac{1}{\eps M_0}, \]
which leads to
\begin{equation}\label{tau1} \tau \gtrsim \frac{L^2}{\log(\eps L M_0)}. \end{equation}  
{\bf Case 2:} $\eps M_0 \ll 1.$ In this case we have $C_2L^2/\tau \ll 1,$ so that $\tau \gtrsim L^2.$ In addition, we estimate
\[ L\int_{C_2L^2/\tau}^\infty \frac{e^{-y}}{y^{3/2}}\,dy \leq L\int_{C_2L^2/\tau}^\infty \frac{1}{y^{3/2}}\,dy \leq \frac{2 \tau^{1/2}}{C_2^{1/2}}. \]
Therefore,
\begin{equation}\label{tau2} \tau \gtrsim \frac{1}{(\eps M_0)^2}. \end{equation}

The bound \eqref{tau1} directly implies the estimates on $T_D$ that appear in Theorem \ref{thm:main}.

\subsection{Formal Heuristics with Chemotaxis Term}
\label{Heuristics_chemotaxis}
Now we go back to the full system \eqref{eq:sim_system}, which includes the chemotaxis term. Let $T_C$ denote the time when a quarter of $||\rho_2||_{L^1}$ has been reacted out. The following formal argument suggest that adding this term may greatly 
reduce the typical reaction time in some parameter regimes.
Namely, in the regime $M\epsilon/\gamma\gg 1$, we formally show that $T_C\sim\frac{L}{\gamma}$, which can be much smaller compared to the purely diffusive typical reaction time scale $T_D\gtrsim \frac {L^2}{\log(\varepsilon LM_0)}$.

The heuristics for the estimate $T_C\sim\frac{L}{\gamma}$ is as follows.
Recall that in one dimension, the Green's function of the Laplacian is given by
\begin{equation}\label{invlap} (-\Delta)^{-1}f(x) = -\frac12 \int_{\R} |x-y| f(y)\,dy. \end{equation}
Therefore, due to the chemotaxis term, $\rho_1$ is subject to an advection velocity field
\begin{equation} \label{eq:velocity}
v \left( x,t\right) =\chi \nabla \left( \left( -\Delta \right) ^{-1}\rho _{2}\right) \left( x,t\right) =\dfrac {\chi}{2}\left( \int ^{\infty }_{x}\rho _{2}\left( y\right) dy-\int ^{x}_{-\infty }\rho _{2}\left( y\right) dy\right).
\end{equation}
By definition of $T_C,$ for any $t\leq T_C$ we have $||\rho_2(\cdot,t)||_{L^1} \geq \frac34 \sigma$, and $\rho_2(\cdot,t)$ is supported in $[-\frac{1}{2},\frac{1}{2}]$. Therefore, for all $|x|\geq \frac{1}{2}$, and $t\leq T$, we have the inward drift
\begin{equation}\label{eqn:v_field}
v\left( x,t\right) \cdot \frac{-x}{|x|}\sim\gamma.
\end{equation}
Recall that initially most of $\rho_1$ starts at a distance $\sim L$ from the origin. Therefore, within time $t\sim \frac{L}{\gamma}$, advection should bring a significant portion of $\rho_1$ into the neighborhood where $\rho_2$ is supported.
Once a significant portion of $\rho_1$ has been in contact with $\rho_2,$
since $\rho_1$ has mass $M_0$, ideally one may expect $\rho_1\sim M_0$ on the support of $\rho_2.$ 
This is not quite true as for large $\gamma$ the drift can in principle lead to very non-uniform distribution of $\rho_1$ on support of $\rho_2$ with $\sim e^\gamma$ factor difference between maximum and minimum. Nevertheless, 
heuristically, for now we ignore this issue. 
Then we expect that after $\sim L/\gamma$ transport stage, the mass of $\rho_2$ is going to decrease exponentially with rate $M_0\eps$. 
These arguments lead to the estimate
\[ T_C \lesssim 1+ \frac{L}{\gamma}+\frac{1}{M_0\eps};  \] observe that the second summand can be dropped if $M_0 \eps/\gamma \geq 1$ since we also have $L \geq 1.$ 

\section{Global regularity of the two-species system}\label{global}

In this section, we discuss global regularity of the system \eqref{eq:sim_system}. Let us recall that the Sobolev
space $H^s$ is defined as a set of functions for which the norm
\[\|f\|_{H^s}^2:= \int_{R^d}(1+ |k|^{2s})|\hat{f}(k)|^2\,dk\]
is finite. The global regularity for \eqref{eq:sim_system}
in any dimension has been proved in \cite{KNRY}, here we just recall the statement.

\begin{theorem}\label{globreg1120}
Suppose the initial conditions $\rho_{1,0}$, $\rho_{2,0}$ for
(\ref{eq:sim_system}) are non-negative, and lie in $L^1(\R^d) \cap H^m(\mathbb{R}^d)$ with an integer $m>d/2$,
then there is
a unique global in time solution 
\newline ~$(\rho_1(\cdot, t), \rho_2(\cdot,t)) \in C(L^1(\R^d) \cap H^m(\mathbb{R}^d),[0,\infty))$
to \eqref{eq:sim_system}.
\end{theorem}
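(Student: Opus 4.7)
The plan is to combine standard local well-posedness via Picard iteration with an a priori $H^m$ bound that rules out finite-time blowup. The key structural observation is that the second equation of \eqref{eq:sim_system} is a pure pointwise ODE in $t$, so
\[ \rho_2(x,t)=\rho_{2,0}(x)\exp\Bigl(-\varepsilon' \int_0^t \rho_1(x,s)\,ds\Bigr), \]
and in particular $0\leq \rho_2(x,t)\leq \rho_{2,0}(x)$ once non-negativity of $\rho_1$ is known. Consequently $\|\rho_2(\cdot,t)\|_{L^p}\leq \|\rho_{2,0}\|_{L^p}$ for every $p\in[1,\infty]$, and via one application of $\nabla(-\Delta)^{-1}$ the chemotactic drift $v=\chi' \nabla(-\Delta)^{-1}\rho_2$ stays bounded in $L^\infty$ uniformly in time, in terms only of $\|\rho_{2,0}\|_{L^1\cap L^\infty}$.

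For local existence I would set up a fixed-point map $\Phi:\rho_1\mapsto \tilde\rho_1$: given $\rho_1$, compute $\rho_2$ from the ODE formula above, then solve the linear parabolic equation
\[ \partial_t \tilde\rho_1-\Delta\tilde\rho_1+\chi' \nabla\cdot(\tilde\rho_1 v)=-\varepsilon'\rho_2\tilde\rho_1, \quad \tilde\rho_1(\cdot,0)=\rho_{1,0}. \]
Because $m>d/2$, the space $H^m(\R^d)$ is a Banach algebra under pointwise multiplication, which lets me estimate the drift and reaction contributions in $H^m$ directly using Kato--Ponce commutator bounds together with standard heat-semigroup estimates. This yields a contraction on $C([0,T_0];L^1\cap H^m)$ for a time $T_0$ depending only on the initial norms. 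Non-negativity of $\rho_1$ is preserved by the parabolic maximum principle once the coefficients are known to be bounded on $[0,T_0]$, which retroactively justifies the pointwise monotonicity $\rho_2\leq \rho_{2,0}$ used above.

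To globalize I would run energy estimates in $H^m$ for $\rho_1$: applying $\partial^\alpha$ with $|\alpha|\leq m$, multiplying by $\partial^\alpha \rho_1$, and integrating produces an inequality of the form
\[ \frac{d}{dt}\|\rho_1\|_{H^m}^2+\|\nabla\rho_1\|_{H^m}^2\lesssim \bigl(1+\|\rho_2\|_{H^m}\bigr)\|\rho_1\|_{H^m}^2, \]
where the drift contribution is controlled by the algebra property together with the fact that $\nabla(-\Delta)^{-1}$ actually gains a derivative rather than losing one. Differentiating the ODE formula for $\rho_2$ in space gives an analogous estimate for $\|\rho_2(\cdot,t)\|_{H^m}$ in terms of $\|\rho_{2,0}\|_{H^m}$ and $\int_0^t \|\rho_1(\cdot,s)\|_{H^m}\,ds$, again by the algebra property. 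The main technical obstacle is precisely this coupling: the top-order Sobolev norm of $\rho_2$ grows with the time integral of that of $\rho_1$, which in turn drives the energy inequality for $\rho_1$. I would close this via a Gronwall argument on $X(t)=\|\rho_1(\cdot,t)\|_{H^m}+\|\rho_2(\cdot,t)\|_{H^m}$, obtaining at most doubly-exponential growth on any finite interval, and the standard continuation principle then extends the local solution to $[0,\infty)$.
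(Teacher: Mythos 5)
The paper itself does not prove Theorem \ref{globreg1120}: it only recalls the statement and cites \cite{KNRY} for the proof, so your argument has to stand on its own. Your architecture for the local theory is the standard and correct one: the pointwise ODE formula for $\rho_2$, hence $0\leq\rho_2\leq\rho_{2,0}$ and a drift $v=\chi'\nabla(-\Delta)^{-1}\rho_2$ bounded in $L^\infty$ by $\|\rho_{2,0}\|_{L^1\cap L^\infty}$, plus a fixed point in $C([0,T_0];L^1\cap H^m)$ using the algebra property of $H^m$ for $m>d/2$. That part is fine.

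The globalization step, however, has a genuine gap. The coupled estimates you write are superlinear: with $a(t)=\|\rho_1\|_{H^m}$ and $b(t)=\|\rho_2\|_{H^m}$, your scheme gives (at best) $\dot a\lesssim(1+b)\,a$ together with $\dot b\lesssim ab$ (or $b(t)\lesssim \|\rho_{2,0}\|_{H^m}\exp\bigl(C\int_0^t a\bigr)$ from the ODE formula), because the plain algebra bound $\|\rho_1\rho_2\|_{H^m}\lesssim ab$ is all you invoke at top order. Such a system does not yield ``doubly exponential growth on any finite interval'': for instance $\dot a=ab$, $\dot b=ab$ is consistent with your inequalities and blows up in finite time, so Gronwall cannot close and finite-time blowup of the $H^m$ norms is not excluded. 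What is missing is an intermediate, subcritical a priori bound that makes the top-order estimates tame. The natural one: using $\Delta(-\Delta)^{-1}\rho_2=-\rho_2$, the $\rho_1$ equation reads $\partial_t\rho_1+v\cdot\nabla\rho_1=\Delta\rho_1+(\chi'-\varepsilon')\rho_1\rho_2$, and the maximum principle together with $0\leq\rho_2\leq\rho_{2,0}$ gives $\|\rho_1(t)\|_{L^\infty}\leq\|\rho_{1,0}\|_{L^\infty}e^{\chi'\|\rho_{2,0}\|_{L^\infty}t}$, i.e.\ locally bounded in time. With $\|\rho_1\|_{L^\infty}$ and $\|\rho_2\|_{L^\infty}$ under control, replace the algebra bound by the tame (Moser/Kato--Ponce) estimate $\|fg\|_{H^m}\lesssim\|f\|_{L^\infty}\|g\|_{H^m}+\|g\|_{L^\infty}\|f\|_{H^m}$; then the coupled differential inequalities for $(a,b)$ become linear with coefficients bounded on every finite time interval, and Gronwall plus the continuation criterion give the global solution. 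Without this step (or an equivalent a priori bound on a lower norm of $\rho_1$), your closing Gronwall argument does not prove the theorem.
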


\section{Mass Comparison Principle}

In this section, our goal is to compare $\rho_1$ with the solution $\rho$ to the Fokker-Planck equation:
\label{even_comparison}
\begin{equation} \label{eq:fokker_planck}
\partial_t\rho-\Delta \rho + \nabla \cdot \left(\rho \nabla  H\right)=0,
\end{equation}
where $H$ is an effective potential that will be chosen to be ``weaker" than the one created by
$(-\Delta)^{-1} \rho_2(x,t)$ for all times relevant for the argument.
It is convenient for us to work with potentials that are not smooth. In particular, our main potential $H$ is defined by
\begin{equation}
\label{eq:least_concentrated_H}
  H(x)=\begin{cases}
               -\frac{1}{3}x\gamma & \text{for }x\geq\frac{1}{2}, \\
               -\frac{1}{24} (3-4x+12x^2)\gamma & \text{for }0\leq x\leq\frac{1}{2},\\
               -\frac{1}{24} (3+4x+12x^2)\gamma & \text{for }-\frac{1}{2}\leq x\leq0,\\
               \frac{1}{3}x\gamma & \text{for }x\leq-\frac{1}{2}.
            \end{cases}
\end{equation}
On the positive real line, $H(x)$ equals $\gamma(-\Delta)^{-1}\chi_{[-\frac16,\frac{1}{2}]}$, while on the negative real line 
\newline 
 $\gamma(-\Delta)^{-1} \chi_{[-\frac{1}{2},\frac16]}$.
\begin{figure}
\centering
\includegraphics[width=0.5\linewidth]{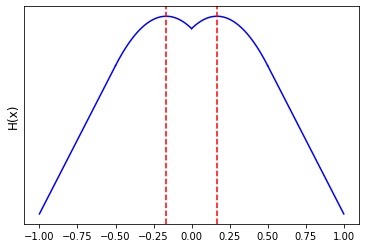}
  \caption{Weakest potential}
  \label{fig:H(X)}
\end{figure}
Note that $H$ has a discontinuous derivative at $x=0$ and discontinuous second derivative at $x = \pm 1/2.$
Standard parabolic regularity (see e.g. \cite{KrylSob}) implies that the solution $\rho(x,t)$ of \eqref{eq:fokker_planck} with such potential is smooth
away from $x=0,\pm 1/2.$ It is not difficult to show that $\rho(x,t)$ for $t >0$ will have a jump in the second derivative at $x =\pm 1/2$ and a jump in the first derivative
at $x=0.$ This regularity is sufficient for all manipulations carried out below.

To begin, we state a mass comparison principle that was proved in \cite{KNRY} in the two dimensional case. We provide a sketch of the argument for the sake of completeness.
Let $r=|x|$ and note that $\partial_r f(x) = \partial_x f(x) {\rm sgn}(x)$ in one dimension.  
\begin{prop}\label{prop:mass_comp}
Suppose that $u_1$ and $u_2$ are non-negative solutions to
\begin{equation}
    \partial_t u_i-\Delta u_i+\nabla\cdot (u_i\nabla H_i)=0\quad\text{for $i=1,2$},
\end{equation}
where $H_2$ is even with $\partial_r H_2(x,t)\geq \partial_r H_1(x,t)$ for all $x$ and $t \geq 0$, and $u_1$ is more concentrated than $u_2$ at $t=0$, i.e,
\begin{equation}
    \int_{-r}^r u_1(x,0)dx\geq \int_{-r}^ru_2(x,0)dx\text{ for all }r\geq 0.
\end{equation}
Then $u_1(\cdot,t)$ is more concentrated than $u_2(\cdot,t)$ for all $t\geq0$, i.e,
\begin{equation}
    \int_{-r}^r u_1(x,t)dx\geq \int_{-r}^ru_2(x,t)dx\text{ for all }t\geq 0.
\end{equation}
\end{prop}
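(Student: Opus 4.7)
The plan is to reduce the claim to a one-dimensional linear parabolic inequality for the cumulative mass functions
\[ M_i(r,t) := \int_{-r}^{r} u_i(x,t)\,dx, \qquad w(r,t) := M_1(r,t) - M_2(r,t), \]
and then to close it with the maximum principle. The hypothesis gives $w(r,0)\ge 0$; the parabolic boundary values are $w(0,t)=0$ and, from mass conservation for \eqref{eq:fokker_planck} together with the $r\to\infty$ limit of the initial ordering, $\lim_{r\to\infty} w(r,t)\ge 0$.

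First I would integrate each Fokker--Planck equation over $[-r,r]$ and use $\partial_r M_i = u_i(r)+u_i(-r)$ and $\partial_r^2 M_i = \partial_x u_i(r)-\partial_x u_i(-r)$ to obtain
\[ \partial_t M_i = \partial_r^2 M_i - \bigl[u_i(r,t)\,\partial_x H_i(r,t) - u_i(-r,t)\,\partial_x H_i(-r,t)\bigr]. \]
Evenness of $H_2$ makes $\partial_x H_2$ odd in $x$, so the bracket for $i=2$ collapses exactly to $\partial_r H_2(r)\,\partial_r M_2$ and $M_2$ satisfies the equality
\[ \partial_t M_2 - \partial_r^2 M_2 + \partial_r H_2(r)\,\partial_r M_2 = 0. \]
For $i=1$ the potential $H_1$ need not be even, so no analogous cancellation is available; however, rewriting the bracket in terms of $\partial_r H_1$ gives $u_1(r)\,\partial_r H_1(r)+u_1(-r)\,\partial_r H_1(-r)$, and the pointwise hypothesis $\partial_r H_2\ge\partial_r H_1$ together with $u_1\ge 0$ and the evenness $\partial_r H_2(-r)=\partial_r H_2(r)$ bound it above by $\partial_r H_2(r)\,\partial_r M_1$. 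This yields the one-sided inequality $\partial_t M_1 \ge \partial_r^2 M_1 - \partial_r H_2(r)\,\partial_r M_1$; subtracting gives
\[ \partial_t w - \partial_r^2 w + \partial_r H_2(r)\,\partial_r w \ge 0 \qquad \text{on } \{r>0,\; t>0\}. \]

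The proof concludes with the weak parabolic maximum principle on this half-strip: the drift coefficient $\partial_r H_2$ is bounded (for the $H$ in \eqref{eq:least_concentrated_H} it is piecewise linear in $r$), $w$ is bounded above in absolute value by the total mass, and $w\ge 0$ on the parabolic boundary, so $w\ge 0$ throughout. The principal technical obstacle I anticipate is the limited regularity of the potentials of actual interest: the $H$ from \eqref{eq:least_concentrated_H} has a jump in $\partial_x H$ at $0$ and a jump in $\partial_x^2 H$ at $\pm\tfrac12$, which makes the pointwise derivation of the equations for $M_i$ and the coefficient $\partial_r H_2$ in the inequality only measurable in $r$. I would circumvent this in the standard way: replace $H_2$ by a smooth mollification $H_2^{(\delta)}$ satisfying the same monotonicity against $H_1$ up to $o(1)$ as $\delta\to 0$, run the maximum principle argument for the resulting $w^{(\delta)}$, and pass to the limit using continuous dependence of solutions of \eqref{eq:fokker_planck} on the potential.
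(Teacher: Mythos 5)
Your proof is correct and follows essentially the same route as the paper: derive the parabolic equation for the cumulative masses $M_i(r,t)$, use the evenness of $H_2$, the ordering $\partial_r H_2\ge\partial_r H_1$, and $u_1\ge 0$ to get the differential inequality $\partial_t w-\partial_r^2 w+\partial_r H_2\,\partial_r w\ge 0$ for $w=M_1-M_2$, and conclude by the parabolic maximum/comparison principle on $\{r>0\}$. The only (harmless) difference is cosmetic: the paper bounds the $H_1$-term via $\max\{\partial_r H_1(r),\partial_r H_1(-r)\}\le\partial_r H_2(r)$ rather than directly, and it handles the limited smoothness of $H$ by the regularity remarks preceding the proposition instead of your mollification step.
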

Note that $u_{1,2}$ and $H_1$ are not required to be even, only $H_2$. Also, $H_{1,2}$ can both be time dependent, even though in our case here 
$H_2$ will be stationary. 
\begin{proof}
Define the masses
\begin{equation}
    M_{i}(r,t):=\int_{-r}^r u_i(x,t)dx.
\end{equation}
Then we have
\begin{align}
\begin{split}
    \partial_tM_i(r,t)&=\int_{-r}^r \Delta u_i dx-\int_{-r}^r\nabla\cdot (u_i\nabla H_i)dx \\&= \partial_r u_i(r)-\partial_r u_i(-r)-(u_i(r)\partial_r H_i(r)+u_i(-r)\partial_r H_i(-r)) \\
    &=\partial_{r}^2M_i(r,t)-(u_i(r)\partial_r H_i(r)+u_i(-r)\partial_r H_i(-r)).
\end{split}
\end{align}
Using the even symmetry of $H_2,$ $\partial_r M_i \geq 0,$ and the fact that $\partial_r H_2\geq \partial_r H_1$, we find that
\begin{align}
\begin{split}
        \partial_t (M_1-M_2)-\partial_{r}^2(M_1-M_2)\geq \partial_rH_2(r,t)\partial_rM_2-\max\{\partial_rH_1(r,t),\partial_rH_1(-r,t)\}\partial_rM_1 \\
        \geq (-\partial_rH_2(r,t))\partial_r(M_1-M_2)+(\partial_rH_2(r,t)-\max\{\partial_rH_1(r,t),\partial_rH_1(-r,t)\})\partial_rM_1\\
        \geq (-\partial_rH_2(r,t))\partial_r(M_1-M_2).
\end{split}
\end{align}
The standard parabolic comparison principle implies that
\begin{equation}
    M_1(r,t)\geq M_2(r,t) \quad \text{for all } r,t\geq 0.
\end{equation}
\end{proof}

Here is the reason for our choice of the potential $H$ in \eqref{eq:least_concentrated_H}.
\begin{prop}\label{comppot}
Let $f(x)\geq0$ be a function 
such that
\begin{equation}\label{fassump}
    0\leq f(x)\leq \rho_2(x,0) \quad \text{and} \quad ||f||_{L^1}\geq \frac{3}{4}||\rho_2(\cdot,0)||_{L^1}.
\end{equation}
then $(-\Delta)^{-1}f$ will create a stronger attraction towards the origin than $H:$
namely,
\begin{equation}\label{bal1229} \partial_r H(x) \geq \partial_r (-\Delta)^{-1}f(x) \end{equation}
for all $|x|>0.$
\end{prop}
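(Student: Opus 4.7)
By inspection of \eqref{eq:least_concentrated_H}, the potential $H$ is even in $x$, so it suffices to prove the inequality for $x > 0$; the case $x < 0$ then follows by applying the same argument to the reflected function $f(-\cdot)$, which still satisfies \eqref{fassump} since $\rho_{2,0}$ is essentially even. For $x > 0$ we have $\partial_r = \partial_x$, and \eqref{invlap} immediately yields the identity
\[
\partial_x (-\Delta)^{-1} g(x) \;=\; \tfrac12 \int_x^{\infty} g(y)\,dy \;-\; \tfrac12 \int_{-\infty}^{x} g(y)\,dy.
\]
Applying this both to $g = f$ and to $g = \gamma \chi_{[-1/6, 1/2]}$ (which represents $H$ on the positive half-line, by \eqref{eq:def_H}), the claim reduces to showing
\[
D(x) \;:=\; \tfrac12 \int_x^{\infty}\!(\gamma \chi_{[-1/6,1/2]} - f) \;-\; \tfrac12 \int_{-\infty}^{x}\!(\gamma \chi_{[-1/6,1/2]} - f) \;\geq\; 0
\]
for every $x > 0$, and I handle two regimes separately.

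For $x \geq 1/2$ both $\chi_{[-1/6, 1/2]}$ and $f$ (the latter because $f\leq \rho_{2,0}$ is supported in $[-1/2, 1/2]$) vanish on $[x, \infty)$, so a direct evaluation gives $D(x) = \tfrac12\bigl(||f||_{L^1} - \tfrac{2\gamma}{3}\bigr)$, which is nonnegative by the mass lower bound in \eqref{fassump}. For $0 < x < 1/2$ the same bookkeeping yields
\[
D(x) \;=\; \tfrac12\Bigl[\gamma\bigl(\tfrac13 - 2x\bigr) \;+\; ||f||_{L^1} \;-\; 2\int_x^{1/2}\!f(y)\,dy\Bigr],
\]
and I bound $\int_x^{1/2} f \leq \gamma(\tfrac12 - x)$ using the pointwise constraint $f \leq \rho_{2,0}$ together with the essentially uniform upper bound on $\rho_{2,0}$ (normalized against $\gamma$ via $\gamma = \sigma\chi'$). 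The $x$-dependent terms cancel, and the expression again reduces to $\tfrac12\bigl(||f||_{L^1} - \tfrac{2\gamma}{3}\bigr)$, so the same conclusion from \eqref{fassump} applies.

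The one point that deserves care is that the specific numerology in \eqref{eq:def_H}---the interval $[-\tfrac16, \tfrac12]$, which produces both the $\tfrac13$ slope of $H$ on $[\tfrac12,\infty)$ and the $\tfrac{2\gamma}{3}$ ``effective mass'' appearing in $D$---is exactly matched to the fraction $\tfrac34$ in \eqref{fassump}. The whole argument ultimately turns on the sharp numerical inequality $\tfrac34 \geq \tfrac23$, with the extremal configuration of $f$ (shifting as much mass as possible to the right while saturating both constraints) bringing $D$ almost to zero. I anticipate no further obstacle: the only work is the two-case split followed by elementary integration with the support and mass bounds on $f$.
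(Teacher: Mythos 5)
Your route is structurally the same as the paper's: you use the explicit drift formula (\eqref{invlap}, \eqref{eq:velocity}) to reduce \eqref{bal1229} to a left/right mass comparison between $f$ and the profile supported on $[-\tfrac16,\tfrac12]$, split into the regimes $x\geq\tfrac12$ and $0<x<\tfrac12$, and close with the support/height bound on $f$ plus the mass lower bound; the cancellation of the $x$-dependent terms in your $D(x)$ is just the combined form of the paper's two termwise inequalities, and the reflection reduction for $x<0$ is fine since you only use the even envelope $\sigma\chi_{[-1/2,1/2]}$ and the reflection-invariant mass bound.

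There is, however, one genuine problem: the normalization of the comparison density. You compare $f$ against $\gamma\chi_{[-1/6,1/2]}$ and need both $f\leq\gamma$ pointwise and $\|f\|_{L^1}\geq\tfrac{2\gamma}{3}$. But \eqref{fassump} together with \eqref{rho2init} only yields $f\leq\sigma$ and $\|f\|_{L^1}\geq\tfrac34\cdot\tfrac{998}{1000}\,\sigma$; since $\gamma=\sigma\chi$ with $\chi$ a free (typically large) parameter, neither bound transfers to $\gamma$, and in fact your two uses of the ``normalization'' pull in opposite directions (the pointwise bound wants $\sigma\leq\gamma$, the mass bound wants $\tfrac34\cdot\tfrac{998}{1000}\,\sigma\geq\tfrac23\gamma$), so they are compatible only when $\chi$ is essentially $1$. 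Thus the key step ``$D(x)=\tfrac12(\|f\|_{L^1}-\tfrac{2\gamma}{3})\geq0$ by \eqref{fassump}'' does not follow as written — admittedly the statement \eqref{bal1229} itself invites this, since it omits the factor $\chi$ that accompanies the drift in the application. The paper's proof makes the comparison at the level of mass densities, $\sigma\chi_{[-1/6,1/2]}$ versus $f$ (e.g.\ $\|f\|_{L^1}\geq\sigma\|\chi_{[-1/6,1/2]}\|_{L^1}=\tfrac{2\sigma}{3}$ for $x\geq\tfrac12$), and the chemotactic coefficient then enters only as a common positive factor when this is converted into the drift comparison actually used downstream, since $H=\chi\cdot\sigma(-\Delta)^{-1}\chi_{[-1/6,1/2]}$ on the positive half-line. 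Replacing $\gamma$ by $\sigma$ throughout your two-case computation (and noting that the $\tfrac34$ versus $\tfrac23$ margin must also absorb the factor $\tfrac{998}{1000}$, because $\|\rho_{2,0}\|_{L^1}$ may be slightly smaller than $\sigma$) turns your argument into exactly the paper's proof.
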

\begin{proof}
From \eqref{eq:velocity}, we can see that the drift $\partial_r (-\Delta)^{-1}f(x)$ is determined by the balance between mass of $\rho_2$ on the left and right of $x$.
If $x \geq \frac12,$ then \eqref{bal1229} holds simply because $\|f\|_{L^1} \geq \sigma\|\chi_{[-1/6,1/2]}\|_{L^1}$ due to \eqref{rho2init} and \eqref{fassump}.
If $0 \leq x \leq \frac12,$ then \[ \sigma \int_x^{\infty}\chi_{[-1/6,1/2]}(y)\, dy \geq \int_x^{\infty} f(y)\,dy \]
due to \eqref{rho2init}. On the other hand,
\[ \int_{-\infty}^x f(y)\,dy \geq \sigma \int_{-\infty}^x \chi_{[-1/6,1/2]}(y)\, dy \]
due to \eqref{fassump} and \eqref{rho2init}.
Consideration of $x<0$ is similar.
\end{proof}
\begin{prop}\label{thm:FKvsChemotaxis}
Let $\rho_1(x,t),\rho_2(x,t)$ solve the system \eqref{eq:sim_system} with $\rho_{1,0}(x)$ and $\rho_{2,0}(x)$ as in \eqref{rho1init}, \eqref{rho2init}. 
Denote by $T_C$ the time it takes for the $L^1$ norm of $\rho_2$ to drop by a quarter.
Let $\rho(x,t)$ solve the Fokker-Planck equation \eqref{eq:fokker_planck} with the drift potential $H$ given by \eqref{eq:least_concentrated_H}
and the same initial data as $\rho_1$. Then for every $0 <t\leq T_C$ and every $r>0$ 
we have
\begin{equation} \label{eq:FKvsChemotaxis}
\int^{r}_{-r}\rho _{1}(x,t)dx\geq\int^{r}_{-r}\rho _{}(x,t)dx-\frac{\sigma}{4}.
\end{equation}
\end{prop}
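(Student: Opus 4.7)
The plan is to compare $\rho_1$ and $\rho$ indirectly by inserting an auxiliary density $\tilde\rho_1$ which solves the same drift--diffusion equation as $\rho_1$, driven by the very same $\rho_2$, but with the reaction term stripped away:
\begin{equation*}
    \partial_t \tilde\rho_1 - \Delta \tilde\rho_1 + \chi\nabla\cdot\bigl(\tilde\rho_1\nabla(-\Delta)^{-1}\rho_2\bigr)=0, \qquad \tilde\rho_1(\cdot,0)=\rho_{1,0}.
\end{equation*}
I will establish two facts on $[0,T_C]$: (i) $\tilde\rho_1$ is more concentrated than $\rho$, and (ii) the nonnegative excess $w:=\tilde\rho_1-\rho_1$ has total mass at most $\sigma/4$. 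Combining them immediately produces
\begin{equation*}
    \int_{-r}^{r}\rho_1\,dx = \int_{-r}^{r}\tilde\rho_1\,dx - \int_{-r}^{r}w\,dx \ge \int_{-r}^{r}\rho\,dx - \tfrac{\sigma}{4},
\end{equation*}
which is exactly \eqref{eq:FKvsChemotaxis}.

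For step (i), I would apply Proposition \ref{prop:mass_comp} with $u_1=\tilde\rho_1$, $u_2=\rho$, $H_1(x,t)=\chi(-\Delta)^{-1}\rho_2(x,t)$, and $H_2=H$ from \eqref{eq:least_concentrated_H}. The evenness of $H_2$ is built into its definition, and $u_1(\cdot,0)=u_2(\cdot,0)=\rho_{1,0}$, so the initial-concentration hypothesis is automatic. The drift inequality $\partial_r H\ge \partial_r H_1$ uniformly on $[0,T_C]$ is exactly the conclusion of Proposition \ref{comppot} applied pointwise in $t$ to $f=\rho_2(\cdot,t)$: the identity $\partial_t\rho_2=-\varepsilon\rho_1\rho_2\le 0$ gives the pointwise domination $0\le \rho_2(\cdot,t)\le \rho_{2,0}$, while the very definition of $T_C$ gives $\|\rho_2(\cdot,t)\|_{L^1}\ge \tfrac34\|\rho_{2,0}\|_{L^1}$ for every such $t$. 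Both hypotheses of Proposition \ref{comppot} are thus satisfied, and (i) follows.

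For step (ii), subtracting the equations for $\rho_1$ and $\tilde\rho_1$ gives
\begin{equation*}
    \partial_t w - \Delta w + \chi\nabla\cdot\bigl(w\nabla(-\Delta)^{-1}\rho_2\bigr)=\varepsilon\rho_1\rho_2, \qquad w(\cdot,0)=0,
\end{equation*}
with coefficients smooth thanks to Theorem \ref{globreg1120}. The standard parabolic maximum principle, applied after an exponential rescaling $\tilde w=e^{-Mt}w$ to absorb the zeroth-order term $+\chi\rho_2 w$ that appears when the divergence is expanded, gives $w\ge 0$. Integrating in $x$ and using decay at infinity to drop boundary contributions yields $\partial_t\int w\,dx=\varepsilon\int \rho_1\rho_2\,dx=-\partial_t\int\rho_2\,dx$, so $\int w(\cdot,t)\,dx=\|\rho_{2,0}\|_{L^1}-\|\rho_2(\cdot,t)\|_{L^1}\le \sigma/4$ for $t\le T_C$, using $\|\rho_{2,0}\|_{L^1}\le\sigma$. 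The main subtlety is ensuring the drift comparison in step (i) holds uniformly on $[0,T_C]$; this is exactly why the comparison interval $[-1/6,1/2]$ in \eqref{eq:def_H} was chosen to have mass $2/3$, strictly below the $3/4$ threshold preserved throughout $[0,T_C]$, giving a margin that absorbs both reaction-induced mass loss and any spatial rearrangement of the remaining $\rho_2$.
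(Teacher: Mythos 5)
Your argument is correct and is essentially the paper's own proof: the same reaction-free auxiliary density $\tilde\rho_1$, the same appeal to Proposition \ref{comppot} (via $0\le\rho_2(\cdot,t)\le\rho_{2,0}$ and the $\tfrac34$-mass bound up to $T_C$) to invoke the mass comparison of Proposition \ref{prop:mass_comp} against $\rho$, and the same accounting that the mass lost by $\rho_1$ equals that lost by $\rho_2$, hence at most $\sigma/4$. Packaging the pointwise bound $\tilde\rho_1\ge\rho_1$ and the mass deficit into the single quantity $w=\tilde\rho_1-\rho_1$ is only a cosmetic reorganization of the paper's final chain of inequalities.
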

\begin{proof}
Let $\widetilde{\rho}_1$ solve the equation for $\rho_1$ without reaction term:
\begin{equation} \label{eq:modified_chemotaxis}
\partial_t\widetilde{\rho }_1-\Delta \widetilde{\rho }_1+ \nabla \cdot \left(\widetilde{\rho }_1\nabla \left( -\Delta \right) ^{-1}\rho _{2}\right)=0,
\end{equation}
and let $\widetilde{\rho}_1$ have the same initial data as $\rho_1$. Note that since \[ \int_{\R} \rho_2(x,t)\,dx \geq \frac34 \int_{\R} \rho_{2,0}(x)\,dx \] for all $t \leq T_C,$
$\partial_r (-\Delta)^{-1}\rho_2(\cdot,t)\leq \partial_r H(\cdot)\leq 0$ for all such $t$ by Proposition \ref{comppot} (where as usual $H$ is given by \eqref{eq:least_concentrated_H}).
Thus Proposition \ref{prop:mass_comp} gives
\begin{equation}\label{eq:mass_rho_rho_tilde}
    \int_{-r}^r\Tilde{\rho}(x,t)dx\geq \int_{-r}^r\rho(x,t)dx\quad \text{for all } t\leq T_C \text{ and } r\in\left(0,\infty\right).
\end{equation}
To prove \eqref{eq:FKvsChemotaxis}, it suffices to compare $\rho_1$ and $\widetilde{\rho}_1$ and show that
\begin{equation} \label{eq:FKvsChemotaxis_2}
\int^{r}_{-r}\rho _{1}(x,t)dx\geq\int^{r}_{-r}\widetilde{\rho }_{1}(x,t)dx-\frac{\sigma}{4} \hspace{5mm}\text{ for }t\leq T.
\end{equation}
This is due to the following simple observation: on one hand, since the only difference between the equations for $\rho_1$ and $\widetilde{\rho}_1$ is the reaction term, applying parabolic comparison principle on these two densities directly gives
\begin{equation} \label{eq:FKvsChemotaxis_3}
\widetilde{\rho} _{1}(x,t)\geq\rho _{1}(x,t) \hspace{5mm}\text{ for all }x,t.
\end{equation}
On the other hand, we claim that for all $t\leq T_C$, the mass of $\rho_1$ and $\widetilde{\rho}_1$ does not differ by more than $\frac{\sigma}{4}$. Note that the mass of $\widetilde{\rho}_1$ is preserved over time, while the mass for $\rho_1$ is decreasing due to the reaction term. To prove the claim, it suffices to show that the mass of $\rho_1$ can at most drop $\frac{\sigma}{4}$ over time $T_C$. This is true since the equations for $\rho_1$ and $\rho_2$ have the same reaction term, hence
\begin{equation} \label{eq:FKvsChemotaxis_4}
\left\| \rho _{1}\left( \cdot ,0\right) \right\| _{L^{1}}-\left\| \rho _{1}\left( \cdot ,t\right) \right\| _{L^{1}}=\left\| \rho _{2}\left( \cdot ,0\right) \right\| _{L^{1}}-\left\| \rho _{2}\left( \cdot,t\right) \right\| _{L^{1}}\leq\frac{\sigma}{4}
\hspace{5mm}\text{ for }t\leq T_C,
\end{equation}
where in the last step we used the definition that $T_C$ is the time for a quarter of $\rho_2$ to be reacted out. Combining \eqref{eq:FKvsChemotaxis_3} with \eqref{eq:FKvsChemotaxis_4} yields
\begin{equation*} \label{eq:FKvsChemotaxis_5}
\begin{split}
\int^{r}_{-r}\rho _{1}(x,t)dx &=\int^{}_{\mathbb{R}}\rho _{1}(x,t)dx-\left( \int^{\infty}_{r}\rho _{1}(x,t)dx+\int^{-r}_{-\infty}\rho _{1}(x,t)dx  \right)\\
& \geq- \frac{\sigma}{4} +\int^{}_{\mathbb{R}}\rho_{1,0}(x)dx-\left( \int^{\infty}_{r}\widetilde{\rho} _{1}(x,t)dx+\int^{-r}_{-\infty}\widetilde{\rho} _{1}(x,t)dx  \right)\\
& = \int^{r}_{-r}\widetilde{\rho }_{1}(x,t)dx -\frac{\sigma}{4}.
\end{split}
\end{equation*}
Hence, \eqref{eq:FKvsChemotaxis_2} is obtained, and the proof is finished once we combine it with \eqref{eq:mass_rho_rho_tilde}.
\end{proof}

\section{Transport estimates based on comparison principles}
In this section. we show that a significant portion of the initial mass of $\rho_0$ gets transported inside $[- \frac{1}{2},\frac{1}{2}]$ under the potential $H$ in time $\tau\sim\frac{L}{\gamma}$.
Let us define the operator $L$ by $L\rho=-\Delta \rho +\nabla \cdot (\rho\nabla H)$, and we consider its dual operator $L^*f=-\Delta f-\nabla H\cdot\nabla f$, and consider the evolution
\begin{equation} \label{eq:equivalent_equation}
\partial_tf-\Delta f-\nabla H\cdot \nabla f=0.
\end{equation}
Here we will prove a duality result:
\begin{lemma}\label{prop:duality}
Let $\rho$ and $f$ be solutions to \eqref{eq:fokker_planck} and \eqref{eq:equivalent_equation} respectively, with initial conditions $\rho_0$ and $f_0$, where $\rho_0\in L^\infty(\R,e^{-H}dx)\cap L^1(\mathbb{R})$ and $f_0\in L^{\infty}(\mathbb{R})\cap L^1(\R,e^H dx)$. Then, for any $t>0$ and $s\in[0,t]$, the integral
\begin{equation}
    \int_{\mathbb{R}} \rho(x,s)f(x,t-s)dx
\end{equation}
does not depend on $s$.
\end{lemma}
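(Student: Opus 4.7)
My plan is a standard adjoint-semigroup computation. Set $g(s):=\int_{\R}\rho(x,s)\,f(x,t-s)\,dx$ and differentiate under the integral. Since $\rho$ satisfies $\partial_s\rho = -L\rho$ with $L\rho = -\Delta\rho + \nabla\cdot(\rho\nabla H)$, and $f$ satisfies $\partial_\tau f = -L^*f$ with $L^*f = -\Delta f - \nabla H\cdot\nabla f$, the chain rule gives
\begin{equation*}
g'(s) = -\int (L\rho)(x,s)\,f(x,t-s)\,dx + \int \rho(x,s)\,(L^*f)(x,t-s)\,dx.
\end{equation*}
The lemma therefore reduces to the pointwise-in-time adjoint identity $\int (L\rho)f\,dx=\int \rho\,(L^*f)\,dx$, which is nothing but Green's identity applied twice to the Laplacian and once to the divergence term:
\begin{equation*}
\int(-\Delta\rho)\,f\,dx=\int\rho\,(-\Delta f)\,dx,\qquad \int\nabla\cdot(\rho\nabla H)\,f\,dx=-\int\rho\,\nabla H\cdot\nabla f\,dx.
\end{equation*}
Summing these yields the desired identity and hence $g'\equiv 0$ on $(0,t)$; continuity of $g$ in $s$ (which follows from continuity in time of parabolic solutions with values in the weighted function spaces in the statement) then promotes constancy to the closed interval $[0,t]$.

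The two items to justify are (a) control of the behavior at spatial infinity so that the boundary terms in the integrations by parts vanish, and (b) the low regularity of $H$ at $x=0,\pm\tfrac12$. For (a), the weighted hypotheses propagate in time. A direct computation shows that $e^H$ is a stationary solution of \eqref{eq:fokker_planck}, and that $u:=\rho/e^H$ satisfies exactly \eqref{eq:equivalent_equation}; the parabolic maximum principle then gives $\|\rho(\cdot,s)/e^H\|_{L^\infty}\le\|\rho_0/e^H\|_{L^\infty}$, while the analogous principle applied directly to $f$ gives $\|f(\cdot,\tau)\|_{L^\infty}\le\|f_0\|_{L^\infty}$. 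Because $H(x)=-\gamma|x|/3$ for $|x|\ge\tfrac12$, the weight $e^H$ decays exponentially at infinity, so $\rho f$, $\rho\,\nabla f$, and $(\nabla\rho)f$ are integrable with vanishing tails at $\pm\infty$; standard interior parabolic estimates lift the $L^\infty$ bound on $u$ to a pointwise bound on $|\nabla u|$ on compacts away from the singularities of $H$, yielding $|\nabla\rho|\lesssim e^H$ and similarly for $\nabla f$. Multiplying the integrands by a smooth cutoff $\chi_R(x)\to 1$ and sending $R\to\infty$ rigorously eliminates the boundary contributions at infinity.

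For (b), $H$ is globally Lipschitz with $\nabla H\in L^\infty$, so the Fokker--Planck and dual equations are uniformly parabolic with bounded drift; their solutions are $C^{0,\alpha}$ in $x$ and smooth on the complement of the three singular points. The cleanest route is by smoothing: choose $H_\eta\in C^\infty$ with $H_\eta\to H$ in $W^{1,\infty}$, solve the corresponding Fokker--Planck equation (for $\rho_\eta$) and dual equation (for $f_\eta$) with the same initial data, and observe that for smooth $H_\eta$ both Green identities are trivially valid, so $g'_\eta\equiv 0$. Since $e^{H_\eta}$ remains a stationary solution of the smoothed equation, the bounds from (a) are uniform in $\eta$, which permits passage to the limit in $g_\eta(s)\to g(s)$ using strong $L^1_{\mathrm{loc}}$ stability of parabolic solutions under $W^{1,\infty}$ perturbations of the drift. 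The principal obstacle is packaging this limit argument so that the weighted $L^\infty$ bounds are uniform in the smoothing parameter; once that uniformity is secured, the adjoint identity, and hence the lemma, is automatic.
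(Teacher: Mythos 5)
Your proposal is correct and follows essentially the same route as the paper: differentiate $\int \rho(x,s)f(x,t-s)\,dx$ in $s$, use the two equations, and kill the resulting terms by the adjoint (Green/divergence theorem) identity $\int (L\rho)f\,dx=\int\rho\,(L^*f)\,dx$. The paper simply invokes ``standard approximation arguments'' for smooth, decaying data, whereas you fill in that step (propagation of the weighted $L^\infty$ bounds via the stationary solution $e^H$ and the substitution $u=\rho e^{-H}$, cutoffs at infinity, and smoothing of $H$ at its kinks), which is a legitimate elaboration rather than a different proof.
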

\begin{proof}
By standard approximation arguments, it suffices to show the result for smooth, sufficiently decaying $\rho$, $f$. Taking the derivative in $s$ gives
\begin{align}
\begin{split}\label{eq:duality}
    \partial_s \int_{\mathbb{R}}  \rho(x,s)f(x,t-s)dx & = \int_{\mathbb{R}}  f(x,t-s)\partial_s \rho(x,s)dx+\rho(x,s)\partial_sf(x,t-s)dx\\
    &=\int_{\mathbb{R}}f(x,t-s)(\Delta \rho(x,s)-\nabla\cdot(\rho(x,s)\nabla H(x,s)))dx \\
    &-\int_{\mathbb{R}}
    \rho(x,s)(\Delta f(x,t-s)+\nabla f(x,t-s)\cdot\nabla H(x,t-s)))dx.
\end{split}\end{align}
Applying divergence theorem shows that the right hand side equals zero. 
\end{proof}
Let us define $r_1 = \frac{6}{25} < \frac14.$ The reason we need this auxiliary number is that our initial data for $\rho_2$ is slightly less than $\chi_{[-1/2,1/2]}(x),$ so to ensure that at least one quarter of the $\rho_2$ mass 
will get consumed we need to show transport of $\rho_1$ to an interval smaller than $[-1/4,1/4]$ - since the reaction will happen on the passage (see Section \ref{passage}). 
Our next step is the following proposition. 
\begin{prop}\label{thm:control_f}
Let $f(x,t)$ solve \eqref{eq:equivalent_equation} with $H$ given by \eqref{eq:least_concentrated_H}. Suppose initial data $f_0\in C_0^\infty$ is even with $0\leq f_0\leq 1$. $f_0$ is non-increasing in the even direction and $f_0(x)\geq \chi_{[-d_1,d_1]}$ where $d_1<r_1$. Then there exists a constant $C>0$ such that for all $t\geq 0$,
\begin{equation}
    f(x,t)\geq \frac{1}{C}\chi_{[-\frac{1}{C}({1+\gamma t}),\frac{1}{C}({1+\gamma t})]}.
\end{equation}
\end{prop}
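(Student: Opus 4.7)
The plan is a parabolic comparison argument: I build an explicit subsolution $\phi(x,t)$ of the dual Fokker--Planck equation \eqref{eq:equivalent_equation} with $\phi(\cdot,0)\le f_0$ and $\phi(x,t)\ge (1/C)\chi_{[-(1+\gamma t)/C,(1+\gamma t)/C]}$, and then invoke the parabolic comparison principle to deduce $f\ge \phi$. By the evenness of $H$ and $f_0$, $f(\cdot,t)$ remains even, so I construct $\phi$ even and focus on $x\ge 0$. The shape of $H$ dictates a two--piece construction: on $[0,1/2]$ the potential is a concave bump with interior maximum at $x=1/6$, whereas on $[1/2,\infty)$ it is linear, $H(x)=-\gamma x/3$.

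On the outer region the dual equation reduces to the constant--coefficient advection--diffusion $\partial_t f-\partial_{xx}f+(\gamma/3)\,\partial_x f=0$, and I use a translating exponential front
\[
\phi_{\mathrm{out}}(x,t)=A\bigl(1-B\,e^{\alpha(x-1/2-vt)}\bigr)_+, \qquad x\ge 1/2,
\]
with $v=\gamma/(3C_1)$ and $\alpha=\gamma/3-v>0$. A direct computation shows that on the smooth branch this profile solves the outer equation exactly, while the corner at the vanishing front $y_0=(1/\alpha)\log(1/B)$ contributes a positive Dirac to $\phi_{\mathrm{out}}''$ (a convex corner), which only helps the subsolution inequality. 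Thus $\phi_{\mathrm{out}}$ is a weak subsolution of the outer equation, and its $\{\phi_{\mathrm{out}}\ge 1/C\}$ superlevel set translates outward at speed $v\sim \gamma$, producing the $(1+\gamma t)/C$ growth of the interval in the final bound.

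On the inner region I need a time--uniform lower bound $f(x,t)\ge c_0>0$ on a fixed subinterval of $[-1/2,1/2]$, where $c_0$ is allowed to depend on $\gamma$ and $d_1$. Two ingredients combine to produce this. First, a short--time Aronson lower bound for the uniformly parabolic operator $\partial_t-\partial_{xx}-H'\partial_x$ on $[-1,1]$ gives $f(x,t_1)\ge c(\gamma)\chi_{[-1/2,1/2]}(x)$ at some $t_1\sim 1$, using the initial datum $f_0\ge \chi_{[-d_1,d_1]}$. Second, the long--time convergence $f\to f_\infty:=\int f_0 e^H/\int e^H>0$ holds in $L^2(\R,e^H\,dx)$, since the constant function is the unique ground state of the self--adjoint operator $L^*$ on that weighted space; combined with standard regularity this upgrades to a pointwise lower bound on compact sets. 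Taking the infimum of the two bounds yields $c_0=c_0(\gamma,d_1)>0$. I then set $\phi_{\mathrm{in}}(x):=c_0\,\eta(x)$ for an even smooth cutoff $\eta\le 1$ with $\eta\equiv 1$ on $[-3/8,3/8]$ and $\eta\equiv 0$ outside $[-1/2,1/2]$, so that $\phi_{\mathrm{in}}\le f$ automatically.

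Define $\phi:=\max(\phi_{\mathrm{in}},\phi_{\mathrm{out}})$, extended by evenness; the maximum of two viscosity subsolutions is again a subsolution, and by construction $\phi\le f$ at $t=0$, which is enabled by the assumption $d_1<r_1<1/4$ and the choice of $A,B$. Tuning $A,B,v,c_0$ so that the superlevel set $\{\phi\ge 1/C\}$ at time $t$ covers $[-(1+\gamma t)/C,(1+\gamma t)/C]$, the parabolic comparison principle yields the claim. The hard part is the inner uniform--in--time lower bound: the spectral gap of $L^*$ on $L^2(e^H\,dx)$ is typically exponentially small in $\gamma$ because of the double--well character of $H$ (wells at $\pm 1/6$, barrier at $0$), so the convergence to $f_\infty$ is slow and $c_0$ inherits an $e^{-O(\gamma)}$ factor. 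One must also verify the subsolution inequality weakly across the corner of $H$ at $x=0$ and the second--derivative jumps at $x=\pm 1/2$, which is handled by piecewise parabolic regularity.
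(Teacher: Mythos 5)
Your outer traveling-front construction is fine in spirit (it plays the same role as the paper's spreading barrier, and the exponent bookkeeping $v+\alpha\le\gamma/3$ checks out), but the inner, time-uniform lower bound is where the proposal has genuine gaps, and that is exactly the step the paper handles by a completely different and much more elementary device. First, your two inner ingredients do not cover all times: the Aronson/Harnack bound gives a lower bound at a fixed time $t_1\sim 1$ (iterating it to later times degrades the constant geometrically in $t$), while the convergence $f\to f_\infty$ in $L^2(e^H dx)$ only yields pointwise information after the relaxation time, which on your own account may be exponentially long in $\gamma$; ``taking the infimum of the two bounds'' therefore leaves the whole intermediate window uncontrolled, so no bound valid for all $t\ge 0$ is actually established. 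Second, even where your bounds do apply, the constant $c_0$ you obtain is of size $e^{-O(\gamma)}$. That is fatal for the way the proposition is used: in Corollary 5.4 and in the proof of Theorem 1.1 the conclusion $\int_{-6/25}^{6/25}\rho_1\ge M_0/C$ must dominate $\sigma/4$ under the hypothesis $M_0/\sigma\ge B$ with $B$ depending only on $a$, and $T=CL/\gamma$ must be $\lesssim L/\gamma$ with a universal constant; a $\gamma$-dependent (let alone exponentially large) $C$ proves a strictly weaker statement that cannot support the main theorem.

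The paper avoids all of this with no Harnack inequality and no spectral theory. Since the dual flow conserves $\int_{\R} f e^{H}dx$ and preserves both evenness and monotonicity in $|x|$, one gets directly
\begin{equation}
f(d_0,t)\ \ge\ \frac{\int_{d_0}^{d_1}e^{H}dx}{\int_{d_0}^{\infty}e^{H}dx}\ \ge\ \frac12
\end{equation}
for all $t\ge0$ once $\gamma$ is large, because the weight $e^{H}$ concentrates where $H$ is largest and the ratio tends to $1$ as $\gamma\to\infty$; the exponential smallness you feared is an artifact of treating the drift as adversarial, whereas for $|x|>1/6$ it actually points inward and this is what the conservation argument exploits. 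With that universal anchor $f(d_0,t)\ge\tfrac12$, the paper then uses a single stretched convex barrier $F(x,t)=\omega\bigl(d_0+(|x|-d_0)\phi(t)\bigr)$ with $\phi(t)=(1+\gamma t/C)^{-1}$, verifies $\partial_t F\le L^{*}F$ using $\partial_x H\le -\gamma/60$ on $|x|\ge d_0$ and the convexity of $\omega$, and concludes by comparison; this simultaneously produces the spreading at speed $\sim\gamma$ and a level $1/C$ with $C$ universal, which is what the rest of the paper needs. If you want to salvage your approach, you would have to replace the Harnack/spectral inner step by some argument of this conservation/monotonicity type (and also make the matching of $\max(\phi_{\mathrm{in}},\phi_{\mathrm{out}})$ at $x=1/2$ rigorous, since your $\phi_{\mathrm{in}}=c_0\eta$ is not itself a subsolution), at which point you would essentially be reproducing the paper's proof.
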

\begin{proof}
Let us first fix $d_1$ and $d_0$ such that $\frac16 < d_0<d_1<r_1<\frac14$. For simplicity, we specify $d_0=\frac{11}{60}$ and $d_1=\frac{13}{30}$. One can pick other numbers, which will change the constant $C$. Now recall $H$ is even, and $f_0$ since is even as well, the solution $f$ to \eqref{eq:equivalent_equation} remains even for all times. Due to parabolic comparison principles, $f(x,t)$ is also non-increasing in the even direction, and satisfies $1\geq f_0(x,t)\geq 0$ for all times. Since both $f$ and $H$ are even, we will just show calculation on the positive real line.
First, observe that
\begin{align*}
\begin{split}
        \int_{d_0}^{d_1} e^H dx &= \int_{d_0}^\infty \chi_{[0,d_1]}e^H dx\leq \int_{d_0}^\infty f_0e^H dx\leq \int_0^\infty f_0e^H dx-\int_0^{d_0} f e^H dx \\
        &\leq \int_{d_0}^\infty fe^H dx\leq f(d_0,t) \int_{d_0}^\infty e^H dx.
\end{split}
\end{align*}
Here in the penultimate step we used that $\int_{\R} f e^H\,dx$ is conserved by evolution. 
From the above inequality, we obtain that
\begin{equation*}
    f(d_0,t)\geq\frac{\int_{d_0}^{d_1} e^H dx}{\int_{d_0}^\infty e^H dx}.
\end{equation*}
Due to monotonicity of $H$ on $[\frac16, \infty)$ we have
\begin{equation}\label{eq:inside}
   I_1:= \int_{d_0}^{d_1} e^{-\frac{\gamma}{24}(3-4x+ 12x^2)} dx \geq \frac{d_1-d_0}{2} e^{H(\frac{d_0+d_1}{2})} \gtrsim e^{-\frac{67 \gamma}{600}},
\end{equation}
while
\begin{equation}\label{eq:middle}
   I_2:= \int_{d_1}^{1/2} e^{-\frac{\gamma}{24}(3-4x+ 12x^2)} dx \lesssim (\frac12 -d_1) e^{H(d_1)} \lesssim e^{-\frac{809 \gamma}{7200}} 
\end{equation}
and 
\begin{equation}\label{eq:outside}
   I_3:= \int_{1/2}^{\infty} e^{-\frac{\gamma}{3}x} dx = \frac{3e^{-\gamma/6}}{\gamma}.
\end{equation}
Now observe that
\begin{align*}
    f(d_0,t) = \frac{\int_{d_0}^{d_1} e^H dx}{\int_{d_0}^\infty e^H dx}=\frac{I_1}{I_1+I_2+I_3}.
\end{align*}
Due to \eqref{eq:inside}, \eqref{eq:middle} and \eqref{eq:outside}, it is clear that as $\gamma$ increases, $\frac{\int_{d_0}^{d_1} e^H dx}{\int_{d_0}^\infty e^H dx}$ converges to 1. Therefore, for all sufficiently large $\gamma$ we have $f(d_0,t)\geq\frac{1}{2}$ for all times.

As before, we denote $r=|x|.$ 
Let $\omega\in C^2$ be an even function that is convex on $[d_0,\infty)$ and satisfies $\omega(d_0)=\frac{1}{2}$, $\omega(r)>0$ for $r\in [d_0,d_1]$, and $\omega(r)=0$ if $r\in(d_1,\infty)$. 
For $\phi\in[0,1]$, $r \geq d_0,$ define $\omega_{\phi}(r)=\omega(d_0+(r-d_0)\phi)$. 
Note that for $r \geq d_0,$ 
\begin{equation*}
    L^{*} \omega_\phi = \partial^2_x \omega_{\phi}(x)+\partial_x H (x) \partial_x \omega_{\phi}(x)\geq \frac{\gamma}{60}|\partial_x(\omega_\phi(x))| = \frac{\gamma}{60}|\partial_x \omega(d_0+(r-d_0)\phi)|\phi,
\end{equation*}
where we used that $\partial_x^2 \omega_\phi (x)\geq 0$, $\partial_x \omega_\phi (x)< 0,$ and a straightforward bound on $\partial_x H$ for $d_0 \leq r \leq d_1.$ 
Choose a decreasing $\phi(t)$ with $\phi(0)=1$. Define $F(x,t)=\omega_{\phi(t)}(x)$. Since we always have $f(d_0,t)\geq \frac{1}{2}=F(d_0,t)$ and $f_0\geq \chi_{[-d_1,d_1]}\geq \omega (|x|)$, we can be sure that $f(x,t)\geq F(x,t)$ in $\mathbb{R} \setminus  B_{d_0}$ for all times if $\partial_t F\leq L^*F$. We have \[ \partial_t F=(r-d_0)\partial_x \omega(d_0+(r-d_0)\phi)\phi^{'}(t) \] 
and $\partial_t F=L^{*}F=0$ if $d_0+(r-d_0)\phi\geq\frac{1}{2}$. Hence we need to ensure that
\begin{equation*}
    -(r-d_0)\phi^{'}(t)\leq \frac{\gamma}{60}\phi
\end{equation*}
when $r\leq d_0+\frac{\frac12-d_0}{\phi}.$ This would follows from 
\begin{equation}
    -\frac{\frac12-d_0}{\phi}\phi'(t)=-\frac{19}{60\phi}\phi^{'}(t)\leq \frac{\gamma}{60}\phi
\end{equation}
which holds provided that $\partial_t\left(\frac{1}{\phi(t)}\right)\leq \frac{1}{C}\gamma$. Hence, we can take
\begin{equation}
    \phi(t)=\frac{1}{1+\frac{\gamma t}{C}}.
\end{equation}
Now fix a constant $a<d_1-d_0$, then we can make
\begin{equation}
    d_0+\frac{1}{1+\frac{\gamma t}{C}}(r-d_0)\leq d_0+a
\end{equation}
for $d_0 \leq x \leq d_0+\frac{1}{C}(1+\gamma t)$ if we choose $C$ large enough. Then, we obtain
\begin{equation}
    f(x,t)\geq \omega
    \left(d_0+\frac{r-d_0}{1+\frac{\gamma t}{C}}\right)\geq \omega(d_0+a)\geq \frac{1}{C}>0,
\end{equation}
where we may have to adjust our constant $C$ to make it larger if necessary.
Note that due to monotonicity of $f(x,t)$ we have $f(x,t) \geq 1/2$ for $|x| \leq d_0.$  
\end{proof}
Here we give a corollary that states a fixed fraction of $\rho(x,t)$ satisfying \eqref{eq:fokker1} must enter $[-\frac{6}{25},\frac{6}{25}]$ in time $\lesssim L/\gamma.$ 
\begin{corollary}\label{corollary:enter_center}
Let $\rho(x,t)$ solve \eqref{eq:fokker1} with a potential $H$ given by \eqref{eq:least_concentrated_H}. Suppose that the initial data $\rho_0(x)\geq 0$ and $\int_{|x| \leq L} \rho_0(x)=M_0$. Then for all sufficiently large $\gamma$, there exists a constant $C$ such that if $t\geq T:=\frac{CL}{\gamma}$, we have
\begin{equation}
    \int_{-\frac{6}{25}}^{\frac{6}{25}}\rho(x,t)\geq \frac{ M_0}{C}.
\end{equation}
\end{corollary}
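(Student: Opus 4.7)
The plan is to combine the duality identity from Lemma \ref{prop:duality} with the quantitative lower bound from Proposition \ref{thm:control_f} applied to a carefully chosen test function, thereby converting a statement about transport of $f$ outward into a statement about transport of $\rho$ inward.

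Fix some $d_1 \in (1/6,\, r_1)$ with $d_1 < r_1 = 6/25$, and pick a test function $f_0 \in C_0^\infty(\R)$ with the following properties: $f_0$ is even, non-increasing in $|x|$, $0 \leq f_0 \leq 1$, identically equal to $1$ on $[-d_1,d_1]$, and supported in $[-6/25,\, 6/25]$. Such $f_0$ is easily constructed via mollification of a bump, and it satisfies every hypothesis of Proposition \ref{thm:control_f}, while also obeying $f_0(x) \leq \chi_{[-6/25,\,6/25]}(x)$. Let $f(x,t)$ denote the solution to the dual evolution \eqref{eq:equivalent_equation} with this initial datum.

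Applying Lemma \ref{prop:duality} at $s = 0$ and $s = t$ yields
\begin{equation*}
\int_{\R} \rho(x,t)\, f_0(x)\, dx \;=\; \int_{\R} \rho_0(x)\, f(x,t)\, dx.
\end{equation*}
For the left-hand side, the pointwise bound $f_0 \leq \chi_{[-6/25,\,6/25]}$ gives
\begin{equation*}
\int_{\R} \rho(x,t)\, f_0(x)\, dx \;\leq\; \int_{-6/25}^{6/25} \rho(x,t)\, dx.
\end{equation*}
For the right-hand side, Proposition \ref{thm:control_f} produces a constant $C_0 > 0$ (depending only on $d_1$ and the lower bound on $\gamma$) such that $f(x,t) \geq \tfrac{1}{C_0}\, \chi_{[-(1+\gamma t)/C_0,\,(1+\gamma t)/C_0]}(x)$ for all $t \geq 0$. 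Setting $T := C_0 L /\gamma$, we have $(1+\gamma t)/C_0 \geq L$ whenever $t \geq T$, so $f(\cdot,t) \geq 1/C_0$ on the entire interval $[-L,L]$. Hence
\begin{equation*}
\int_{\R} \rho_0(x)\, f(x,t)\, dx \;\geq\; \frac{1}{C_0}\int_{|x|\leq L} \rho_0(x)\, dx \;=\; \frac{M_0}{C_0}.
\end{equation*}
Chaining the two inequalities gives $\int_{-6/25}^{6/25} \rho(x,t)\, dx \geq M_0/C_0$ for all $t \geq T$, which is precisely the claim with $C = C_0$.

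The only delicate point I foresee is checking the integrability hypotheses required by Lemma \ref{prop:duality}, namely $\rho_0 \in L^\infty(e^{-H}dx)\cap L^1$ and $f_0 \in L^\infty \cap L^1(e^H dx)$. The second is automatic because $f_0$ is bounded and compactly supported. The first is a mild regularity/decay assumption on $\rho_{1,0}$; if the initial data from \eqref{rho1init} is not already smooth enough, one can first truncate and mollify $\rho_0$, apply the argument to the approximation, and pass to the limit using the $L^1$ stability of both flows. Beyond this technicality, the proof is essentially a one-line consequence of duality once Proposition \ref{thm:control_f} is in hand.
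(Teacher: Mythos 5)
Your proposal is correct and follows essentially the same route as the paper: choose an even, monotone test function $f_0$ supported in $[-\tfrac{6}{25},\tfrac{6}{25}]$ satisfying the hypotheses of Proposition \ref{thm:control_f}, apply the duality identity of Lemma \ref{prop:duality}, and use the lower bound $f(\cdot,t)\geq \tfrac{1}{C}$ on $[-L,L]$ for $t\gtrsim L/\gamma$ to conclude. Your added remark on verifying the integrability hypotheses of the duality lemma is a reasonable technical footnote that the paper leaves implicit.
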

\begin{remark}
For simplicity, we picked a fixed range $[-\frac{6}{25},\frac{6}{25}]$. One can obtain similar result for any range that strictly contains $[-\frac{1}{6},\frac{1}{6}]$, but then all constants and the range of validity in $\gamma$ will depend on the range choice.
\end{remark}
\begin{proof}
Recall we chose $d_0=\frac{11}{60}$ and $d_1=\frac{13}{60}$, so we have $d_0<d_1<\frac{6}{25}$. Take $f_0\in C^\infty_0([-\frac{6}{25},\frac{6}{25}])$ as in Theorem \ref{thm:control_f}. Due to Proposition \ref{prop:duality}, we have
\begin{equation}
    \int_\mathbb{R}f_0(x)\rho(x,t) dx=\int_\mathbb{R}f(x,t)\rho_0(x)dx.
\end{equation}
Therefore, applying Theorem \ref{thm:control_f} we find that if $C$ is sufficiently large, then
\begin{equation}
    \int_{-\frac{6}{25}}^{\frac{6}{25}}\rho(x,t)dx \geq\int_\mathbb{R}f_0(x)\rho(x,t)dx=\int_\mathbb{R}f(x,t)\rho_0(x)dx\geq \frac{ M_0}{C}
\end{equation}
for $t \geq \frac{CL}{\gamma}.$ 
\end{proof}

\section{Decay for Density $\rho_2$ Based on a ``Pass-Through" Argument}\label{passage}
Let use now consider the original equations:
\begin{equation}\label{eq:org_sys}
\begin{cases}
\partial_t\rho _{1}-\Delta \rho _{1}+ \nabla \cdot \left(\rho _{1}\nabla \left( -\Delta \right) ^{-1}\rho _{2}\right)=-\varepsilon \rho _{1}\rho _{2}\\
\partial_t\rho _{1}=-\varepsilon \rho _{1}\rho _{2}.
\end{cases}
\end{equation}
In this section, we obtain decay estimates for $\rho_2$ and complete the proof of Theorem \ref{thm:main}. Let us first discuss the idea of the proof. 
We will obtain that $T_C \leq T$ by contradiction, where $T=\frac{CL}{\gamma}$ is the time defined in Corollary \ref{corollary:enter_center}. Assume $T<T_C$, that is, for all $t \leq T$ the total mass of $\rho_2$ remains greater than $3/4$ of the original mass. Then combining Proposition  \ref{thm:FKvsChemotaxis} and Corollary \ref{corollary:enter_center} gives us that
\begin{equation}\label{conc319}
    \int_{-\frac{6}{25}}^{\frac{6}{25}} \rho_1(x,t)dx \geq \frac{M_0}{C}-\frac{\sigma}{4},
\end{equation}
Since we assume that $\sigma \ll M_0$, this implies that
 a significant portion of $\rho_1$ has entered the range $[-\frac{6}{25},\frac{6}{25}]$ by time $T$.
During the process of moving inside this range, $\rho_1$ has to react with $\rho_2$. 
Since the drift velocity $\partial_r(-\Delta)^{-1} \rho_2\sim-\gamma$ for all $r\in(\frac{6}{25},\frac{1}{2})$ and $t\leq T$, heuristically, each particle of $\rho_1$ takes $\frac{1}{\gamma}$ time to pass through the region $[\frac{6}{25},\frac{1}{2}]$, and interacts with $\rho_2$ during  this time with interaction coefficient $\epsilon$. 
If $\frac{\epsilon M_0}{\gamma}\gg 1$, most of $\rho_2$ mass in $[\frac{6}{25}, \frac12]$ and thus a quarter of the initial mass of $\rho_2$ will react with $\rho_1$ by time $T$, contradicting our assumption. 
Therefore, we obtain $T_C\leq T$ as conclusion.

The goal of this section is to rigorously justify the above heuristics. The key step is the following proposition.
\begin{prop}\label{prop:prop_goal}
Let $(\rho_1,\rho_2)$ be a solutions to \eqref{eq:org_sys} with the initial data satisfying \eqref{rho1init}, \eqref{rho2init}, and let the parameters of the problem satisfy 
\eqref{assum1222} with sufficiently large $B.$  
Then there exists a constant $C$ (that may only depend on $B$) such that if $T = \frac{CL}{\gamma}$ then 
\begin{align}\label{eqn:prop_goal}
\int_0^{T}(\rho_1(x,t)+\rho_1(-x,t))dt\geq \frac{ M_0}{C\gamma}\text{, for all }x\in\left(\frac{6}{25},\frac{1}{2}\right).
\end{align}
\end{prop}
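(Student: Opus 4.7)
The plan is a mass-flux analysis on the slab $[-x_0,x_0]$, combining the transport estimate of Section~\ref{even_comparison} with the \emph{a priori} bound $|v(x,t)|\le\chi'\|\rho_2(\cdot,t)\|_{L^1}/2\le\gamma/2$ coming from the conservation of $\rho_2$. Since the proposition is used inside the contradiction argument for Theorem~\ref{thm:main}, I would work throughout under the hypothesis $T\le T_C$; on this window $\|\rho_2(\cdot,t)\|_{L^1}\ge\tfrac34\sigma$, and Proposition~\ref{comppot} applies.

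Under $T\le T_C$, Proposition~\ref{thm:FKvsChemotaxis} together with Corollary~\ref{corollary:enter_center} gives, for every $x_0\in[6/25,1/2]$,
\[
M(x_0,T):=\int_{-x_0}^{x_0}\rho_1(y,T)\,dy\;\ge\;\int_{-6/25}^{6/25}\rho_1(y,T)\,dy\;\ge\;\frac{M_0}{C_0}-\frac{\sigma}{4}\;\ge\;\frac{M_0}{2C_0},
\]
using $M_0/\sigma\ge B$ large; the second condition in \eqref{rho1init} together with $x_0\le L/2$ also gives $M(x_0,0)\le 1$. Setting $\Phi(x):=\int_0^T\rho_1(x,t)\,dt$ and integrating the equation for $\rho_1$ over $[-x_0,x_0]\times[0,T]$, using $v(x_0,t)<0<v(-x_0,t)$ on this window (Proposition~\ref{comppot}), one obtains the mass-balance identity
\[
M(x_0,T)-M(x_0,0)+\varepsilon\!\int_0^T\!\!\int_{-x_0}^{x_0}\!\rho_1\rho_2\,dy\,dt\;=\;\bigl[\partial_x\Phi(x_0)-\partial_x\Phi(-x_0)\bigr]+\int_0^T\!\bigl[|v(x_0,t)|\rho_1(x_0,t)+|v(-x_0,t)|\rho_1(-x_0,t)\bigr]dt.
\]
The left side is $\ge M_0/(3C_0)$ for $B$ large, while the advective integral on the right is at most $(\gamma/2)[\Phi(x_0)+\Phi(-x_0)]$ by the velocity bound.

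The main obstacle is the lack of a definite sign of the diffusive contribution $\partial_x\Phi(x_0)-\partial_x\Phi(-x_0)$. My plan to absorb it into the advective term is to use the one-dimensional elliptic identity $-\partial_x^2\Phi+\partial_xG=\rho_1(\cdot,0)-\rho_1(\cdot,T)-R$ obtained by integrating the PDE in $t$, where $G(x):=\int_0^T\rho_1(x,t)v(x,t)\,dt$ satisfies $|G|\le\tfrac{\gamma}{2}\Phi$ and $R:=\rho_{2,0}-\rho_2(\cdot,T)$. Combined with an \emph{a priori} pointwise upper bound $\Phi\lesssim M_0/\gamma$ on $[-1,1]$ --- which I would derive by a parabolic maximum principle argument for the non-reactive analogue $\widetilde\rho_1$ introduced in Section~\ref{even_comparison}, exploiting that the inward drift of order $\gamma$ generated by $\rho_2$ sweeps particles across $[-1/2,1/2]$ on the time scale $1/\gamma$ --- standard one-dimensional elliptic regularity then yields $|\partial_x\Phi(x_0)-\partial_x\Phi(-x_0)|\lesssim M_0/\gamma$, which for $B$ large (hence $\gamma$ large) is much smaller than $M_0/(3C_0)$ and is absorbed, producing the pointwise conclusion $\Phi(x_0)+\Phi(-x_0)\ge M_0/(C\gamma)$. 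The hypothesis $\chi^2\sigma/\varepsilon\ge a$ plays its role here, keeping the implied constants in the pointwise upper bound and in the gradient estimate uniform as $\gamma\to\infty$.
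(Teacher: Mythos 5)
Your setup (the space--time mass balance on the slab $[-x_0,x_0]$, the lower bound $M(x_0,T)\gtrsim M_0$ from Proposition \ref{thm:FKvsChemotaxis} plus Corollary \ref{corollary:enter_center}, and the bound $|v|\le\gamma/2$) is sound and parallels the first step of the paper's argument, but the step you yourself flag as the main obstacle is a genuine gap, and your proposed fix does not close it. First, the a priori bound $\Phi\lesssim M_0/\gamma$ on all of $[-1,1]$ is not true in general: the drift generated by $\rho_2$ has a null point that stays inside $[-1/8,1/8]$ as long as $\|\rho_2(\cdot,t)\|_{L^1}\ge\tfrac34\sigma$, and $\rho_1$ accumulates there with density far exceeding $M_0$ (this is exactly the $e^{\gamma}$-type non-uniformity the paper warns about), so $\Phi$ is typically $\gg M_0/\gamma$ well inside $[-1,1]$; no soft maximum-principle ``sweep'' argument can give a uniform $M_0/\gamma$ bound on that interval. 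Second, and more decisively, even if one grants $\Phi(\pm x_0)\lesssim M_0/\gamma$, your elliptic identity only yields $|\partial_x\Phi(x_0)-\partial_x\Phi(-x_0)|\le |G(x_0)-G(-x_0)|+\int_{-x_0}^{x_0}|\rho_1(\cdot,0)-\rho_1(\cdot,T)-R|\,dx\lesssim \gamma\cdot(M_0/\gamma)+M_0\sim M_0$, since $\int_{-x_0}^{x_0}\rho_1(\cdot,T)\,dx$ is itself of order $M_0/C$. An error of order $M_0$ with an uncontrolled constant cannot be absorbed against the main term $M_0/(3C_0)$, and this is not an artifact of crude estimation: once $\rho_1$ has piled up inside the slab, the outward diffusive flux at $\pm x_0$ genuinely balances the inward advective flux in the quasi-stationary regime, both being of size $\sim\gamma\Phi(\pm x_0)$, so the time-integrated diffusive boundary term is of the same order as the advective one, not $O(M_0/\gamma)$. (Nor can you simply drop it: during the arrival phase the diffusive flux at $\pm x_0$ can point inward, so it has no definite sign over $[0,T]$.)

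For comparison, the paper never estimates the diffusive boundary flux pointwise. It integrates the identity for $\partial_t M(r,t)$ additionally over the slab radius $s\in(a,b)$, so that the diffusive term appears as $\partial_r M(s,t)=\rho_1(s,t)+\rho_1(-s,t)$ --- exactly the quantity being bounded --- together with a term of favorable sign; this gives only an averaged lower bound $\frac{1}{|I|}\int_I\int_0^{T}\partial_r M\,dt\,ds\ge M_0/(C\gamma)$ on intervals $I$ of length $\gamma^{-1}$. The pointwise conclusion is then recovered by a parabolic Harnack inequality after the rescaling $y=\gamma x$, $\tau=\gamma^2(t-t_0)$ (this is where \eqref{assump319} actually enters, to bound the zeroth-order coefficient $\varepsilon\sigma\gamma^{-2}$, not where you invoke it), followed by summation over time slabs of length $\gamma^{-2}$. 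If you wish to keep your flux formulation, you need some analogous average-to-pointwise mechanism of Harnack type, or else pointwise-in-time gradient bounds of the form $|\partial_x\rho_1(\pm x_0,t)|\lesssim\gamma\,\rho_1(\pm x_0,t)$; the proposed combination of a pointwise $M_0/\gamma$ bound on $\Phi$ with one-dimensional elliptic regularity supplies neither.
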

Note that from \eqref{eqn:prop_goal}, we can say that either
\begin{equation}
    \int_0^{T}\rho_1(x,t)dt\geq \frac{ M_0}{2C\gamma},
\end{equation}
or
\begin{equation}
    \int_0^{T}\rho_1(-x,t)dt\geq \frac{ M_0}{2C\gamma}.
\end{equation}
Before we prove the proposition, let us point out an immediate consequence of it. 
Since $\partial_t\rho_2=-\epsilon\rho_1\rho_2$, we have $\rho_2(x,t)=\rho_2(x,0)e^{-\epsilon\int_0^t\rho_1(x,s)ds}$ for every $x$, thus the proposition directly leads to the following:
\begin{prop}\label{prop:prop_goal_2}
Under the assumptions of Proposition \ref{prop:prop_goal}, for every $x\in(\frac{6}{25},\frac{1}{2})$, at least one of the following must hold:
\begin{align}
\begin{split} \label{eq:decay_rho2}
\frac{\rho_2(x,T)}{\rho_2(x,0)}&=\exp\left\{-\epsilon\int_0^{T}\rho_1(x,t)dt\right\}\leq \exp\left\{-\frac{\epsilon M_0}{2C\gamma}\right\},\\
\frac{\rho_2(-x,T)}{\rho_2(-x,0)}&=\exp\left\{-\epsilon\int_0^{T}\rho_1(-x,t)dt\right\}\leq \exp\left\{-\frac{\epsilon M_0}{2C\gamma}\right\}.
\end{split}
\end{align}
\end{prop}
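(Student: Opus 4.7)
The plan is to derive Proposition \ref{prop:prop_goal_2} as an immediate corollary of Proposition \ref{prop:prop_goal} by solving the $\rho_2$ equation explicitly along each spatial ray. The second equation in \eqref{eq:org_sys}, namely $\partial_t \rho_2 = -\varepsilon \rho_1 \rho_2$, contains no spatial derivatives of $\rho_2$, so at each fixed $x$ it is just a scalar linear ODE in $t$ for $\rho_2(x,\cdot)$ with non-negative coefficient $\varepsilon \rho_1(x,t)$. Integrating in time gives the exact representation
\begin{equation*}
\rho_2(x,T) \;=\; \rho_2(x,0)\, \exp\!\left(-\varepsilon \int_0^T \rho_1(x,s)\,ds\right),
\end{equation*}
together with the same identity at the reflected point $-x$.

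Once this formula is in hand, the proof reduces to producing, for each $x \in (6/25, 1/2)$, a lower bound on either $\int_0^T \rho_1(x,s)\,ds$ or $\int_0^T \rho_1(-x,s)\,ds$. This is exactly what Proposition \ref{prop:prop_goal} provides: the sum of these two time-integrals is at least $M_0/(C\gamma)$, so by pigeonhole at least one of them must be at least $M_0/(2C\gamma)$. Substituting this bound into the exponential representation at whichever of $x, -x$ realizes it yields precisely one of the two alternatives \eqref{eq:decay_rho2}.

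The main obstacle here is essentially nil: the substantive content of the section is entirely packed into Proposition \ref{prop:prop_goal} and its predecessors (the mass comparison with the Fokker--Planck flow in Proposition \ref{thm:FKvsChemotaxis} and the transport estimate in Corollary \ref{corollary:enter_center}). The only small points that require a word of justification are (i) that $\rho_1 \geq 0$ throughout the evolution, so the exponent is non-positive and the integral bound on $\rho_1$ translates cleanly into an upper bound on the exponential (this follows from parabolic comparison, as already used in the proof of Proposition \ref{thm:FKvsChemotaxis}), and (ii) that the ODE representation for $\rho_2$ is meaningful pointwise in $x$, which is legitimate in view of the smoothness afforded by Theorem \ref{globreg1120}. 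Both are immediate, which is why the author calls \eqref{eq:decay_rho2} an \emph{immediate consequence} of Proposition \ref{prop:prop_goal}.
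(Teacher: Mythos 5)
Your argument is correct and coincides with the paper's own: the exponential formula $\rho_2(x,T)=\rho_2(x,0)\exp\bigl(-\varepsilon\int_0^T\rho_1(x,s)\,ds\bigr)$ obtained by integrating the pointwise ODE $\partial_t\rho_2=-\varepsilon\rho_1\rho_2$, combined with the pigeonhole split of the bound from Proposition \ref{prop:prop_goal} into one of the two points $\pm x$, is exactly how the paper deduces Proposition \ref{prop:prop_goal_2}. No further comment is needed.
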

Therefore if $\frac{\epsilon M_0}{\gamma}\gg 1$, then most of the mass $\rho_2$ originally situated in a set of measure at least $\frac{13}{50}$ 
would be gone by time $T$. We can make the share of remaining $\rho_2$ in this set as close to zero as we want by adjusting the constant $B.$ 
Since $\frac{13}{50}>\frac14,$ we get that $T_C \leq T,$ completing the proof of Theorem \ref{thm:main}. 
\begin{proof}[Proof of Proposition \ref{prop:prop_goal}]
We organize our proof into three steps:
\newline \textbf{Step 1}: We show an average version of estimate \eqref{eqn:prop_goal}.
To this end, we define the quantity $M(r,t):=\int_{-r}^{r}\rho_1(x,t)\,dx$ and  define $\tilde H(\cdot, t)=\chi(-\Delta)^{-1}\rho_2(\cdot,t).$ 
Direct calculation yields that
\begin{align}\label{eq:equation_for_M}
\partial_t M = \partial_r^2 M - \rho_1(r,t) \partial_r \tilde H(r,t) - \rho_1(-r,t) \partial_r \tilde H(-r,t)-\epsilon\int_{-r}^r\rho_1(x,t)\rho_2(x,t)dx 
\end{align}
Note that $\partial_r M(r,t)=\rho_1(r,t)+\rho_1(-r,t)$, so it suffices to show that 
\begin{equation}\label{eq:goal_prop_new}
\int_{0}^{T} \partial_r M(r,t)dt\geq\frac{M_0}{C\gamma}\text{, for all }r\in(\frac{6}{25},\frac{1}{2}).
\end{equation}
Let $I=(a,b)\subset(\frac{6}{25},\frac{1}{2})$ be an arbitrary interval. For any $s\in(a,b)$, integrating \eqref{eq:equation_for_M} over $(a,s)$ gives
\begin{equation*}
\begin{split}
\int_a^s \partial_t M(r,t)dr &\leq partial_r M(s,t)-\partial_r M(a,t)+ \int_a^s\left| \partial_r M(r,t)(\partial_r \tilde{H}(r,t) + \partial_r (\tilde{H}(-r,t))\right|dr\\
&\leq \partial_r M(s,t)+C\gamma\int_a^s \partial_r M(r,t)\,dr,
\end{split}
\end{equation*}
where in the inequality we used that $\rho_1, \rho_2 \geq 0$ and $\partial \tilde{H}(\pm r,t)\geq -C\gamma,\ \forall r\in(\frac{6}{25},\frac{1}{2})$. 
Integrating the above inequality from $t=0$ to $t=T$ gives
\begin{equation*}
\int_a^s(M(r,T)-M(r,0))dr\leq\int_0^{T} \partial_r M(s,t)\,dt+C\gamma\int_0^{T}\int_a^s \partial_r M(r,t)\,drdt.
\end{equation*}
The left hand side can be estimated as follows. For any $r\in(\frac{6}{25},\frac{1}{2})$, we have $M(r,0) \ll 1$ due to \eqref{rho1init}. 
Combining Proposition  \ref{thm:FKvsChemotaxis} and Corollary \ref{corollary:enter_center} (see \eqref{conc319}), we have $M(r,T)\geq \frac{M_0}{C}$ for some constant $C$ and for all $r\in(\frac{6}{25},\frac{1}{2})$. Thus $M(r,T)-M(r,0)\geq\frac{M_0}{C_1}$ for all $r\in(a,s)\subset(\frac{6}{25},\frac{1}{2})$, and the above inequality becomes
\begin{equation*}
\int_0^{T}\partial_r M(s,t)dt+C\gamma\int_0^{T}\int_a^s \partial_r M(r,t)drdt\geq\frac{(s-a)M_0}{C_1}.
\end{equation*}
We then integrate this inequality over all $s\in I=(a,b)$, and obtain that
\begin{equation*}
\int_0^{T}\int_{a}^b \partial_r M (s,t )dsdt+C\gamma\int_0^{T}(b-a)\int_a^b \partial_r M(r,t)drdt\geq\frac{(b-a)^2M_0}{2C_1},
\end{equation*}
that is,
\begin{equation*}
\int_0^{T}\frac{1}{|I|}\int_I \partial_r M(s,t)dsdt\geq\frac{M_0}{2C_1(|I|^{-1}+C\gamma)}.
\end{equation*}
Therefore, for any interval $I\subset(\frac{6}{25},\frac{1}{2})$ with $|I|=\gamma^{-1}$, we have 
\begin{equation}\label{eq:int_average}
\frac{1}{|I|}\int_I\int_0^{T} \partial_r M(s,t)dtds\geq\frac{M_0}{C\gamma}.
\end{equation}
 This inequality shows that \eqref{eqn:prop_goal} holds in the interval $I$ in an average sense. This concludes Step 1.\\
 \textbf{Step 2:} We improve the average bound \eqref{eq:int_average} to a local point-wise control. Here we rule out the possibility that $\int_0^{T} \partial_r M(s,t)dt$ is distributed very non-uniformly among $s\in I$. Since $\rho_1(x,t)$ is a solution to the parabolic partial differential equation
 \begin{equation}\label{eq:Yt}
 \partial_t \rho_1-\partial_{x}^2 \rho_{1}+\partial_x \tilde H\partial_x\rho_1+(\partial_{x}^2 \tilde H+\varepsilon \rho_2)\rho_1= 0,
 \end{equation}
 such situation will not happen.
  Let $\rho_1$ be a non-negative solution of \eqref{eq:Yt}, and re-scale \eqref{eq:Yt} by setting $y=\gamma x$, $\tau=\gamma^2(t-t_0)$. In the new coordinates, $\rho_1^*(y,\tau)=\rho_1(x,t)$ satisfies
 \begin{equation*}
 \partial_\tau \rho_1^* - \partial_{y}^2\rho_1^* +b(y)\partial_y\rho_1^*+c(y,\tau)\rho_1^*=0,
 \end{equation*}
 where $|b(y)|\leq C$ and $|c(y,\tau)|\leq C$ for all $y$ and $\tau\geq 0.$ Here the bounds on $b$ and $c$ follow from the facts that $|\tilde{H}_x|\leq C\gamma, |\partial^2_x \tilde H|\leq C\gamma,\rho_2\leq ||\rho_2(\cdot,0)||_\infty\leq \sigma$, and 
 $\varepsilon \sigma \gamma^{-2} = \varepsilon \chi^{-2}\sigma^{-1} \leq a^{-1}$ due to \eqref{assump319}.
 Now by the parabolic Harnack inequality (see e.g. \cite{Lieberman}), for any interval $I'\subset(\frac{6}{25}\gamma,\frac{1}{2}\gamma)$ with length 1, we get that
 \begin{equation*}
 \rho_1^*(y,\tau)\geq \frac{1}{C}\int_{I'}\int_0^1\rho_1^* (y,\tau)d\tau dy\text{, for all }\pm y\in I',\tau\in[1,2].
 \end{equation*}
 Translating this back into original coordinates, there exists a universal constant $C>0$ such that the following estimate holds
 \begin{equation}\label{eq:harnack_local}
 \rho_1(x,t)\geq
\frac{\gamma^3}{C}\int_I\int_{t_0}^{t_0+\gamma^{-2}}\rho_1(x,t)dtdx\text{, for all }\pm x\in I, t\in[t_0+\gamma^{-2},t_0+2\gamma^{-2}],
 \end{equation}
for any interval $I\subset(\frac{6}{25},\frac{1}{2})$ with $|I|=\gamma^{-1}$, and $t_0\geq 0$. This concludes Step 2.
\newline
\textbf{Step 3:} Now we improve the local control \eqref{eq:harnack_local} to the global lower bound stated in Proposition \ref{prop:prop_goal}.
Let us define the time intervals $J_k:=[k\gamma^{-2},(k+1)\gamma^{-2}]$ for $k\in\mathbb{N} \cup\{0\}$. 
-+Let $n$ be the smallest integer such that $(n+1)\gamma^{-2}\geq T$, thus $[0,T]\subset\cup_{0\leq k\leq n} J_k$. Then for any interval $I\subset(\frac{6}{25},\frac{1}{2})$ with $|I|=\gamma^{-1}$, we can rewrite \eqref{eq:int_average} as
 \begin{equation}\label{eq:int_average2}
 \sum_{k=0}^n\int_{J_k}\int_I M_r(r,t)drdt\geq \frac{ M_0}{C\gamma^2}.
 \end{equation}
 Meanwhile, the local estimate \eqref{eq:harnack_local} and the fact that $\rho_1(r,t)+\rho_1(-r,t)= \partial_r M(r)$ yield that
 \begin{equation}\label{eq:m_jk}
 \inf_{r\in I,t\in J_{k+1}}\partial_r M(r,t)\geq
 \frac{\gamma^3}{C}\int_{J_k}\int_I \partial_r M(r,t)drdt\text{, for }k\geq 0.
 \end{equation}
 We then sum up this inequality over $0\leq k\leq n$, and combine it with \eqref{eq:int_average2} to obtain
 \begin{equation*}
 \inf_{r\in I}\int_0^{(n+2)\gamma^{-2}}\partial_r M(r,t)dt\geq \inf_{r\in I}\sum_{k=0}^{n+1}\gamma^{-2}\inf_{t\in J_k} \partial_r M(r,t)dt\geq\frac{M_0}{C\gamma}\text{, for }r\in I.
 \end{equation*}
 Observe that since $L \geq 1$ and $\gamma \geq 1,$ we have $\gamma^{-2} \leq L/\gamma.$ Thus by adjusting slightly the constant $C$, we can take $T \sim L/\gamma$ such that 
 $(n+2)\gamma^{-2}\leq T.$ 
 Thus
 \begin{equation}
 \int_0^{T} \partial_r M(r,t)dt\geq\frac{M_0}{C\gamma}\text{, for all }r\in I,
 \end{equation}
 and since $I\subset(\frac{6}{25},\frac{1}{2})$ is an arbitrary interval with length $\gamma^{-1}$, the above inequality holds for all $r\in(\frac{6}{25},\frac{1}{2})$, which finishes the proof of Proposition \ref{prop:prop_goal}.
 \end{proof}
As explained above after Proposition \ref{prop:prop_goal_2}, this also completes the proof of Theorem \ref{thm:main}. 

Finally, we do not consider the case where $M_0 \eps/\gamma$ is small in this paper exactly because in this case we cannot establish that significant reaction happens ``on the passage". 
On the other hand, since the drift $\sim \gamma$ can be quite large, it can lead to very non-uniform distribution of $\rho_1$ once it moves onto support of $\rho_2.$ This can slow down  
the reaction significantly. Such slowdown, however, is artificial: in realistic biological systems the drift magnitude is rarely large since the speed of the agents has limitations. 
Thus a classical Keller-Segel form of chemotaxis appears ill-suited for modelling the small $M_0 \eps/\gamma$ case. Instead a flux-limited version of chemotactic drift seems more appropriate - 
see \cite{KNRY} for more details. 

\noindent {\bf Acknowledgement.} \rm The authors acknowledge partial support of the NSF-DMS grants 1848790 and 2006372. AK has also been partially supported by Simons Foundation. 

\end{document}